\newtheorem{thm}{Theorem}[section]
\newtheorem{cor}[thm]{Corollary}
\newtheorem{lem}[thm]{Lemma}
\newtheorem{prop}[thm]{Proposition}
\theoremstyle{definition}
\newtheorem{defin}[thm]{Definition}
\newtheorem{rem}[thm]{Remark}
\numberwithin{equation}{section}
\newcommand{\vertiii}[1]{{\left\vert\kern-0.25ex\left\vert\kern-0.25ex\left\vert #1 
		\right\vert\kern-0.25ex\right\vert\kern-0.25ex\right\vert}}
\newcommand{\re}{\mathfrak{Re}\,}
\newcommand{\im}{\mathfrak{Im}\,}
\newcommand{\marg}[1]{\marginpar{\tiny #1}} 
\def\({\left(}
\def\){\right)}
\def\N{\mathbb{ N}}
\def\R{\mathbb{ R}}
\def\fA{\mathfrak A}
\def\fB{\mathfrak B}
\def\fc{\mathfrak c}
\newcommand{\eps}{\varepsilon}
\newcommand{\cA}{{\mathcal A}}
\newcommand{\cB}{\protect{\mathcal B}}
\newcommand{\cL}{{\mathcal L}}
\newcommand{\cP}{{\mathcal P}}
\newcommand\PS{{\normalfont\textsf{PS}}_\textsf{2}}
\newcommand{\Id}{\operatorname{Id}}
\newcommand\restr[2]{{
		\left.\kern-\nulldelimiterspace 
		#1 
		\right|_{#2} 
}}
\newcommand{\bil}[2]{{\left\langle\kern0ex #1,#2
		\kern0ex\right\rangle}}	
\def\1{\textbf{1}}
\def\fin{\operatorname{fin}}
\DeclareMathOperator{\Hom}{Hom}
\newcommand\JL{\operatorname{JL}}
\title[The CSP for Banach lattices]{A negative solution to the complemented subspace problem for Banach lattices}
\subjclass[2020]{46B42, 46B03, 46E15}
\keywords{Banach lattice; complemented subspace; $C(K)$-space; AM-space}
\author[de Hevia]{David de Hevia}
\address{Instituto de Ciencias Matem\'aticas (CSIC-UAM-UC3M-UCM)\\
Consejo Superior de Investigaciones Cient\'ificas\\
C/ Nicol\'as Cabrera, 13--15, Campus de Cantoblanco UAM\\
28049 Madrid, Spain.}
\email{david.dehevia@icmat.es}
\author[Mart\'inez-Cervantes]{Gonzalo Mart\'inez-Cervantes}
\address{Universidad de Murcia, Departamento de Matem\'{a}ticas, Campus de Espinardo 30100 Murcia, Spain.}
\email{gonzalo.martinez2@um.es}
\author[Salguero-Alarc\'on]{\\ Alberto Salguero-Alarc\'on}
\address{Departamento de An\'alisis Matem\'atico y Matem\'atica Aplicada\\
Universidad Complutense de Madrid\\
Plaza de las Ciencias 3\\
28040 Madrid, Spain.}
\email{albsalgu@ucm.es}
\author[Tradacete]{Pedro Tradacete}
\address{Instituto de Ciencias Matem\'aticas (CSIC-UAM-UC3M-UCM)\\
Consejo Superior de Investigaciones Cient\'ificas\\
C/ Nicol\'as Cabrera, 13--15, Campus de Cantoblanco UAM\\
28049 Madrid, Spain.}
\email{pedro.tradacete@icmat.es}
\date{\today}
\begin{document}

\begin{abstract}
Building on a recent construction of G. Plebanek and the third named author, it is shown that a complemented subspace of a Banach lattice need not be linearly isomorphic to a Banach lattice. This solves a long-standing open question in Banach lattice theory.
\end{abstract}

\maketitle

\section{Introduction}

In their 1987 paper \cite{CKT}, P.G. Casazza, N.J. Kalton and L. Tzafriri started by recalling: \textit{``One of the most important problems in the theory of Banach lattices, which is still open, is whether any complemented subspace of a Banach lattice must be linearly isomorphic to a Banach lattice.''} We will refer to this as the \textit{Complemented Subspace Problem} --CSP, for short-- for Banach lattices. This could be framed in the larger research program of understanding the relation between linear and lattice structures on Banach lattices --see \cite{BVL,JMST,kalton, LT2} or \cite[Chapter 5]{Meyer} for classical results, and \cite{AMRT,ART,DLOT,LLOT} for more recent developments. Our aim here is to provide a negative solution to the CSP for Banach lattices.

This problem, although not always explicitly stated, is actually the motivation behind a considerable amount of work in the literature. It was particularly relevant in the research on local unconditional structures in Banach spaces --see \cite{BL76, FJT, GL}-- which generalize the theory of $\mathcal{L}_p$-spaces. In particular, the main conjecture studied in \cite{FJT} is whether every complemented subspace of a Banach lattice has local unconditional structure. Recall that a Banach space $X$ has local unconditional structure in the sense of Gordon-Lewis --GL-lust, for short-- if and only if $X^{**}$ is complemented in a Banach lattice --see \cite[Section 9]{JLhandbook} for details.

The difficulty of the CSP for Banach lattices lies in the fact that most existing criteria used to show that a Banach space is not linearly isomorphic to a Banach lattice do not actually distinguish between Banach lattices and their complemented subspaces. These criteria include for instance the following well-known characterization of reflexivity: if a (complemented subspace of a) Banach lattice does not contain any subspace isomorphic to $c_0$ or $\ell_1$, then it is reflexive \cite[Theorem 1.c.5]{LT2}. Similarly, weak sequential completeness can be characterized by the lack of subspaces isomorphic to $c_0$ --see \cite[Theorem 1.c.4]{LT2}. Another potential difference between Banach spaces and Banach lattices is that the latter contain plenty of unconditional sequences: every sequence of disjoint vectors in a Banach lattice is an unconditional basic sequence; similarly, every complemented subspace of a Banach lattice also contains an unconditional basic sequence \cite[Proposition 1.c.6, Theorem 1.c.9]{LT2}.

Based on these criteria one can exhibit examples of Banach spaces which are not isomorphic to complemented subspaces of Banach lattices: the James space \cite{J} and the space $Y$ constructed by Bourgain and Delbaen in \cite{BD}, since these are non-reflexive but do not contain $c_0$ nor $\ell_1$; also, any hereditarily indecomposable space as these fail to contain unconditional basic sequences \cite{GM}. Another example is the construction due to M. Talagrand --\cite{Tal}, see also \cite{LAT}-- of a reflexive space associated with a weakly compact operator $T\colon \ell_1\rightarrow C[0,1]$ which does not factor through any reflexive Banach lattice --the latter by completely different techniques to those mentioned.

Failure of local unconditional structure also allows us to add more examples to the list of spaces not isomorphic to complemented subspaces of Banach lattices: the Kalton-Peck space \cite{JLS}, Schatten $p$-class operators for $p\neq 2$ \cite{GL}, $H^\infty(\mathbb{D})$ \cite{Pelczynski}, or certain Sobolev spaces and spaces of smooth functions \cite{PW, PW2, Tse}. On the other hand, having local unconditional structure does not ensure being complemented in a Banach lattice: the examples constructed in \cite{BD} of $\mathcal{L}_\infty$-spaces without isomorphic copies of $c_0$ cannot be complemented subspaces of Banach lattices, since if this were the case, then those spaces would be complemented in their biduals \cite[Theorem 1.c.4 and Proposition 1.c.6]{LT2}, which is impossible.

An analogous version of the CSP for \textit{purely atomic} Banach lattices can also be considered, and this turns out to be equivalent to the well-known open question whether every complemented subspace of a space with an unconditional basis has an unconditional basis --which is attributed to S. Banach.

Note that under additional assumptions on the projection, the CSP for Banach lattices is known to have an affirmative solution. This is the case for example if we take a \textit{positive} projection on a Banach lattice, in which case the range is always isomorphic to a Banach lattice --see \cite[p.~214]{Schaefer-book}. As for contractive projections, it is well-known that for any $1\leq p <\infty$ every $1$-complemented subspace of an $L_p$-space is isometrically isomorphic to an $L_p$-space \cite{Bernau-LaceyLp}, separable $1$-complemented subspaces of $C(K)$-spaces are isomorphic to $C(K)$-spaces \cite{B} and, in the complex setting, N. J. Kalton and G. V. Wood proved that every $1$-complemented subspace of a space with a $1$-unconditional basis also has a $1$-unconditional basis \cite{KW} --see also \cite{Flinn}. This last fact does not hold in the real case as shown in \cite{BFL}. All these results and several others concerning $1$-complemented subspaces of Banach lattices may be found in a comprehensive survey about the topic due to B. Randrianantoanina \cite{Beata}.

A closely related long-standing open problem is the CSP for $L_1$-spaces. In the separable setting, this amounts to determining whether every complemented subspace of $L_1[0,1]$ is isomorphic to $L_1[0,1]$, $\ell_1$ or $\ell_1^n$. Significant work on this question can be found in \cite{Douglas, ES, Tal90}.

A somehow dual question is the well-known CSP for $C(K)$-spaces, which asks whether every complemented subspace of a $C(K)$-space is also isomorphic to a $C(K)$-space, with a considerable amount of literature around it --see \cite{B78, Bou, JKS, LW, Ros72}-- and a comprehensive survey due to H.P.~Rosenthal in \cite{Ros}. An explicit connection among the different versions of the CSP will be exhibited in the next section. Namely, a positive answer to the CSP for Banach lattices would imply a positive answer to the CSP for both $L_1$-spaces and $C(K)$-spaces (at least in the separable setting).

The CSP for $C(K)$-spaces has been recently solved by G. Plebanek and the third named author in \cite{PSA}, providing a non-separable counterexample by means of an appropriate construction of a Johnson-Lindenstrauss space. The main goal of the present paper is to prove that the space constructed in \cite{PSA}, which will be denoted by $\PS$ and is $1$-complemented in some $C(K)$-space, is not even isomorphic to a Banach lattice --Theorem \ref{thm:main}--, thus answering the CSP for Banach lattices also in the negative. The proof of this theorem will be essentially based on the following two facts: 
\begin{itemize}
    \item Local theory of Banach lattices can be used to show that it is enough to check that $\PS$ is not isomorphic to an AM-space --see Corollary \ref{cor:Am-spaces}. In addition, certain distinctive features of $\PS$ will allow us to further simplify the problem: it will be sufficient to prove that this space cannot be isomorphic to a sublattice of $\ell_\infty$ --Proposition \ref{prop:simplification}.
    \item We will rewrite the latter in a more convenient way --what we call \textit{having the Desired Property} in Definition \ref{def:desired-property}-- and a careful analysis of the construction of $\PS$ will show that it does actually have this property.
\end{itemize}
Subsequently, we will describe how $\PS$ can be modified to also give a negative solution to the CSP for \textit{complex} Banach lattices --Theorem \ref{thm:complex-CSP}. Moreover, this variation $\widetilde{\PS}$ is still a counterexample to the CSP for real Banach lattices --Corollary \ref{cor:final}.

The paper is organized as follows. Section \ref{SectionAuxiliaryBL} is devoted to gather some necessary results about Banach lattices. Given that our results are strongly based on the construction of $\PS$ from \cite{PSA}, for the sake of readability we summarize the basic necessary facts about $\PS$ on Section \ref{SectionTheSpace}. These will be used in Sections \ref{SectionPSnoBL} and \ref{SectionCSPcomplex} to obtain counterexamples for the CSP both for real and complex Banach lattices.

\section{Preliminaries and auxiliary results on Banach lattices}
\label{SectionAuxiliaryBL}

A Banach lattice is a real Banach space $X$, equipped with a partial order --compatible with the vector space structure-- and lattice operations $x\vee y$ and $x\wedge y$ being respectively the least upper bound, and the greatest lower bound of $x,y\in X$, so that the norm satisfies $\|x\|\leq \|y\|$ whenever $|x|\leq |y|$, where $|x|=x\vee (-x)$.

Let $X$ be a Banach lattice. A linear functional $x^* \in X^*$ is said to be a lattice homomorphism if $x^*(x\vee y)=x^*(x) \vee x^*(y)$ and $x^*(x\wedge  y)=x^*(x) \wedge x^*(y)$ for every $x,y \in X$. We denote by $\Hom(X, \R)$ the set of all lattice homomorphisms in $X^*$. The Banach space dual, $X^*$, is also a Banach lattice, with the order induced by the positive functionals, that is $x^*\geq0$ when $x^*(x)\geq0$ for every $x\geq0$. Note that every lattice homomorphism in $X^*$ is in particular positive. Fortunately enough, it suffices to know the lattice structure of $X^*$ in order to determine the linear functionals in $X^*$ that belong to $\Hom(X,\R)$. Namely, it is well-known that a functional $x^* \in X^*$ with $x^*>0$ (i.e. $x^*$ is positive and $x^*\neq 0$) is a lattice homomorphism if and only if it is an atom in $X^*$ --see, e.g., \cite[Section 2.2, Exercise 5]{AB}. Recall that an element $x>0$ in a Banach lattice $X$ is said to be an atom if and only if $x \geq u \geq 0$ implies that $u=ax$ for some scalar $a \geq 0$. 
For instance, if $X=\ell_1(\Gamma)$ for some set $\Gamma$, then it is immediate that the atoms of $X$ are precisely those elements of the form $\lambda e_\alpha$, where $\lambda>0$ and $\{e_\alpha: \alpha\in \Gamma\}$ are the vectors of the canonical basis of $\ell_1(\Gamma)$. 
Thus, for any Banach lattice $X$ with $X^*=\ell_1(\Gamma)$ we have $\Hom(X,\R)=\{\lambda e_\alpha: \lambda \geq 0,~\alpha\in \Gamma\}$.

A Banach lattice $X$ is said to be an AL-space if $\|x+y\|=\|x\|+\|y\|$ for all $x,y\in X$ with $x,y\geq 0$. Analogously, an AM-space is a Banach lattice $X$ whose norm satisfies $\|x\lor y\|=\max\{\|x\|,\,\|y\|\}$ whenever $x,y\in X$, $x,y\geq 0$. There is a well-known duality relation between these two classes: $X$ is an AL-space --respectively, an AM-space-- if and only if $X^*$ is an AM-space --resp., an AL-space-- \cite[Theorem 4.23]{AB}. The remarkable representation theorems of Kakutani allow us to identify AL-spaces with $L_1$-spaces (up to a lattice isometry), while AM-spaces can be identified in a lattice isometric way with sublattices of $C(K)$-spaces --see, for instance, \cite[Theorem 4.27, Theorem 4.29]{AB} or \cite[Theorem 1.b.2, Theorem 1.b.6]{LT2}.

Given $1\leq p\leq \infty$ and $\lambda\geq 1$, a Banach space $X$ is said to be an $\mathcal{L}_{p,\lambda}$-space if for every finite-dimensional subspace $E$ of $X$ there is a finite dimensional subspace $F$ of $X$ such that $E\subseteq F$ and $F$ is $\lambda$-isomorphic to $\ell_p^{n}$, where $n=\text{dim}\, F$. We say that a Banach space $X$ is an $\mathcal{L}_p$-space if it is an $\mathcal{L}_{p,\lambda}$-space for some $\lambda$. The most relevant properties of these classes of Banach spaces may be found in \cite{LP68, LR69}.

The discussion in the Introduction about Banach spaces being isomorphic or not to Banach lattices suggests this is not a simple question in general. It is however considerably easier to prove that a given space is not \textit{isometric} to a Banach lattice --see for example \cite[Theorem 4.1]{HHM83}, where a similar but less complicated construction than that of \cite{PSA} is used to produce a space not isometric to any Banach lattice.  

In our argument to show that $\PS$ cannot be isomorphic to a Banach lattice it will be enough to show that it cannot be isomorphic to a sublattice of $\ell_\infty$ --see Corollary \ref{cor:desired property}. The reason for this is based on the following proposition stated in \cite{AW75}; since the proof is not given explicitly there, we include one below for the convenience of the reader.

\begin{prop}\label{p:BLL1}
Let $X$ be a Banach lattice which is an $\mathcal{L}_1$-space. Then $X$ is lattice isomorphic to an $L_1$-space.
\end{prop}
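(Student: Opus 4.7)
I would apply Kakutani's representation theorem in the form: a Banach lattice $Y$ is lattice isomorphic to an $L_1(\mu)$-space if and only if $Y$ is $1$-concave, i.e.\ there exists $M\geq 1$ with
\begin{equation*}
\sum_{i=1}^n \|x_i\| \leq M\,\Bigl\|\sum_{i=1}^n |x_i|\Bigr\|, \qquad x_1,\ldots,x_n \in Y.
\end{equation*}
The reverse triangle inequality is automatic in any Banach lattice, so the whole plan is to establish $1$-concavity for $X$ using its $\mathcal{L}_1$-structure.

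By a standard reduction --- the Riesz sublattice generated by finitely many elements of $X$ is finite-dimensional and decomposes into its finitely many disjoint atoms --- it suffices to show $\sum_i\|x_i\|\leq M\|\sum_i x_i\|$ for pairwise disjoint positive $x_1,\ldots,x_n\in X$. Fix such a family. Disjointness yields the sign-invariance $\|\sum_i \epsilon_i x_i\|_X = \|\sum_i x_i\|_X$ for every $\epsilon\in\{\pm 1\}^n$. By the $\mathcal{L}_{1,\lambda}$-hypothesis, $\spn\{x_i\}$ sits in a finite-dimensional $F\subseteq X$ admitting an isomorphism $T\colon F \to \ell_1^{\dim F}$ of distortion at most $\lambda$. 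Setting $y_i = Tx_i$, the above sign-invariance transfers to
\begin{equation*}
\Bigl\|\sum_i \epsilon_i y_i\Bigr\|_{\ell_1} \;\asymp_\lambda\; \Bigl\|\sum_i y_i\Bigr\|_{\ell_1},\qquad\text{uniformly in }\epsilon\in\{\pm 1\}^n.
\end{equation*}
The plan is then to show that such uniform sign-insensitivity in $\ell_1^{\dim F}$ forces the $y_i$ to be approximately disjointly supported, so that $\sum_i \|y_i\|_{\ell_1}\leq C(\lambda)\|\sum_i y_i\|_{\ell_1}$; translating back through $T$ gives $1$-concavity of $X$ with a constant depending only on $\lambda$, and Kakutani's theorem delivers the desired lattice isomorphism onto some $L_1(\mu)$.

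The main obstacle is precisely this last $\ell_1^m$-step: extracting a constant $C(\lambda)$ independent of $n$ from the uniform sign-invariance of the $y_i$. A naive Khintchine-type averaging picks up factors of order $\sqrt n$, so one has to exploit more carefully the interplay between the lattice-unconditional structure inherited from $X$ and the geometry of $\ell_1^m$ --- for instance, by iterating the $\mathcal{L}_{1,\lambda}$-condition to enlarge $F$ sufficiently, or by passing to the Kakutani-dual $\mathcal{L}_\infty$-Banach lattice $X^*$. In the dual approach, choosing positive norming functionals $x_i^*\in X^*_+$ with $x_i^*(x_i)=\|x_i\|$ yields the trivial bound $\sum_i\|x_i\|=\sum_i x_i^*(x_i) \leq \|\sum_i x_i^*\|\,\|\sum_j x_j\|$, so everything reduces to controlling $\|\sum_i x_i^*\|$ --- equivalently, to showing that $X^*$ is quantitatively an AM-space, which is the dual rigidity statement.
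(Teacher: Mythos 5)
Your skeleton is the same as the paper's: everything comes down to a uniform lower $\ell_1$-estimate $\sum_i\|x_i\|\leq C(\lambda)\,\bigl\|\sum_i x_i\bigr\|$ for pairwise disjoint $x_i$, after which an AL-renorming plus Kakutani finishes the job. But you stop exactly at the step that carries all the content, and the route you sketch for it does not work. Uniform sign-invariance of $\bigl\|\sum_i\epsilon_i y_i\bigr\|$ in $\ell_1^m$ does \emph{not} force the $y_i$ to be approximately disjointly supported, nor does it yield a lower $\ell_1$-estimate with constant independent of $n$: realize the $n$ Rademacher functions as vectors in $\ell_1^{2^n}$ (i.e.\ $L_1$ of the uniform measure on $2^n$ atoms); they form a $1$-unconditional system with $\sum_i\|y_i\|_1=n$ but $\bigl\|\sum_i y_i\bigr\|_1\sim\sqrt{n}$. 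So the property you retain after pushing through $T$ (unconditionality in $\ell_1^m$) is strictly weaker than the lattice-disjointness you started with, and it is precisely the discarded information that must be exploited. Your dual fallback is circular as stated: bounding $\bigl\|\sum_i x_i^*\bigr\|$ for norming functionals, i.e.\ showing $X^*$ is quantitatively an AM-space, is equivalent to the AL-estimate you are trying to prove. (A smaller issue: the sublattice generated by finitely many elements of a Banach lattice need not be finite-dimensional, so that reduction needs a different justification; reducing to disjoint vectors is nevertheless legitimate via the renorming argument.)

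The missing ingredient in the paper is Grothendieck's theorem in the $\mathcal{L}_1$-setting, applied through absolutely summing operators rather than through a finite-dimensional $\ell_1^m$-picture. Since $X$ embeds into some $L_1(\mu)$ it contains no copy of $c_0$, hence is order continuous, so the band $B_i$ generated by each $x_i$ is a projection band with band projection $P_i$. One forms $S:X\to\ell_2^{m+1}$, $Sx=(x_i^*\circ P_i(x))_i$ with $x_i^*\in S_{X^*}$ norming $x_i$, and the operators $T_x:\ell_\infty^{m+1}\to X$, $e_i\mapsto P_i(x)$. Because $X$ is $\mathcal{L}_{1,\lambda}$, $S$ is $1$-summing with $\pi_1(S)\leq K_G\lambda\|S\|$ and each $T_x$ is $2$-summing with $\pi_2(T_x)\leq K_G\lambda\|T_x\|\leq K_G\lambda\|x\|$; the second estimate gives $\bigl(\sum_i\|P_i(x)\|^2\bigr)^{1/2}\leq K_G\lambda\|x\|$, hence $\|S\|\leq K_G\lambda$, and then $1$-summability of $S$ together with the trivial bound $\sum_i|x^*(x_i)|\leq|x^*|(|x|)\leq\|x\|$ for $x^*\in B_{X^*}$ yields $\sum_i\|x_i\|=\sum_i\|Sx_i\|\leq(K_G\lambda)^2\|x\|$. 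Without invoking this (or an equivalent use of Grothendieck's inequality), your argument does not close.
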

\begin{proof}
Fix $\lambda>1$ such that $X$ is an $\mathcal{L}_{1,\lambda}$-space. Since $X$ is an $\mathcal{L}_1$-space, then it is isomorphic to a subspace of a certain $L_1(\mu)$ \cite[Proposition 7.1]{LP68}. Hence, $X$ cannot contain isomorphic copies of $c_0$ and, thus, $X$ is an order continuous Banach lattice \cite[Theorem 4.60]{AB}.

Let us consider the following norm in $X$:
\begin{equation}\label{eq:definition norm}
\vertiii{x}=\sup\left\{\sum_{i=0}^m \|x_i\|\::\: m\in\N,\:\, x_0,\ldots, x_m\in X \text{ pairwise disjoint s.t. } \sum_{i=0}^m x_i=x \right\}.
\end{equation}
We claim that $\vertiii{\cdot}$ defines an equivalent AL-norm for $X$. This fact implies, by Kakutani's representation theorem, that $X$ endowed with this new norm is lattice isometric to an $L_1$-space \cite[Theorem 4.27]{AB} and thus we obtain that $X$ is lattice isomorphic to an $L_1$-space.

Let us detail the proof of the previous claim. We begin by showing the next inequalities:
\begin{equation}\label{eq:equivalent norm}
\|x\|\leq \vertiii{x} \leq (K_G \lambda)^2 \|x\|, \quad \text{ for every } x\in X,
\end{equation}
where $K_G$ stands for Grothendieck's constant for real scalars. The first inequality is trivial, so we shall focus on the second one. Fix a natural number $m$ and $x_0,\ldots, x_m$ pairwise disjoint vectors in $X$. For each $i\in\{0,\ldots, m\}$, let $B_i$ be the band generated by $x_i$ in $X$. Since $X$ is order continuous, in particular, $X$ is $\sigma$-complete and thus each $B_i$ is a projection band \cite[Proposition 1.2.11]{Meyer}. Let us denote by $P_i$ its corresponding band projection from $X$ onto $B_i$ for $0\leq i \leq m$. For each $0\leq i \leq m$, take $x_i^*\in S_{X^*}$ such that $x_i^*(x_i)=\|x_i\|$. 


Consider the bounded linear operator
\begin{eqnarray*}
S&: &X\longrightarrow \:\ell_2^{m+1} \\
&&x\,\mapsto (x_i^* \circ P_i(x))_{i=0}^m
\end{eqnarray*}
Since $X$ is an $\mathcal{L}_{1,\lambda}$-space, we have that $S$ is a $1$-summing operator with $\pi_1(S)\leq K_G \lambda \|S\|$ \cite[Theorem 3.1]{DJT}. Now, we are going to show that $\|S\|\leq K_G \lambda$. To this end, we introduce the following family of operators, which are defined for each $x\in X$ by the linear extension of
\begin{eqnarray*}
T_x& :&\ell_\infty^{m+1} \longrightarrow X \\
&&\:e_i \longmapsto P_i(x)
\end{eqnarray*}
Using again the fact that $X$ is an $\mathcal{L}_{1,\lambda}$-space, by \cite[Theorem 3.7]{DJT} we have that $T_x$ is $2$-summing with $\pi_2(T_x)\leq K_G\lambda \|T_x\|$ for every $x\in X$.
Taking into account that the elements $(P_i(x))_{i=0}^m$ are pairwise disjoint and that each $P_i$ is a band projection, we obtain that
$$ \left| T_x\bigl((a_i)_{i=0}^m\bigr) \right|=\left|\sum_{i=0}^m a_i P_i(x) \right|=\sum_{i=0}^m |a_i||P_i(x)| 
\leq |x| \bigl\|(a_i)_{i=0}^m \bigr\|_\infty$$
and, thus, $\|T_x\|\leq \|x\|$.

Now, observe that 
$$\sup\left\{\left(\sum_{i=0}^m  |y^*(e_i)|^2\right)^{\frac{1}{2}}\::\: y^*\in B_{(\ell_\infty^{m+1})^*}\right\}=\sup\left\{\left(\sum_{i=0}^m  |a_i|^2\right)^{\frac{1}{2}}\::\: (a_j)_{j=0}^m\in B_{\ell_1^{m+1}}\right\}= 1,$$
and since $T_x$ is $2$-summing with $\pi_2(T_x)\leq K_G\lambda \|x\|$, for every $x\in X$, the next inequality holds:
$$\left(\sum_{i=0}^m \|P_i(x)\|^2\right)^{\frac{1}{2}}=\left(\sum_{i=0}^m \|T_x(e_i)\|^2\right)^{\frac{1}{2}}\leq K_G\lambda \|x\|, \qquad \text{ for every } x\in X.$$
From the above identity, it follows that
$$\|Sx\|=\left(\sum_{i=0}^m |x_i^*(P_i(x))|^2\right)^{\frac{1}{2}}\leq \left(\sum_{i=0}^m \|P_i(x)\|^2\right)^{\frac{1}{2}}  \leq K_G\lambda \|x\|, \qquad \text{ for every } x\in X,$$
so we get that $\pi_1(S)\leq (K_G\lambda)^2$. Given that the vectors $(x_i)_{i=0}^m$ are pairwise disjoint and $\sum_{i=0}^m x_i=x$, for every $x^*\in B_{X^*}$ we have that
$$\sum_{i=0}^m|x^*(x_i)|\leq \sum_{i=0}^m|x^*|(|x_i|)=|x^*|\left(\sum_{i=0}^m |x_i|\right)=|x^*|(|x|).$$
Therefore, $\sup\left\{\sum_{i=0}^m|x^*(x_i)|\::\: x^*\in B_{X^*}\right\}\leq \|x\|$ and we finally obtain
$$\sum_{i=0}^m \|x_i\|=\sum_{i=0}^m \|Sx_i\|\leq (K_G \lambda)^2\|x\|.$$
Since the preceding inequality does not depend on the choice of $m\in\mathbb{N}$ and $x_0,\ldots,x_m\in X$, this proves the identity (\ref{eq:equivalent norm}).

It is straightforward to check that the map $\vertiii{\cdot}$ defined in (\ref{eq:definition norm}) is a norm on $X$ and is complete because it is equivalent to the complete norm $\|\cdot\|$, as we have already exhibited in (\ref{eq:equivalent norm}). It remains to show that it is indeed a lattice norm. To this end, take $x,y\in X$ such that $|x|\leq |y|$ and fix a natural number $m$ and a finite sequence $(y_i)_{i=0}^m$ of pairwise disjoint vectors in $X$ such that $y=\sum_{i=0}^m y_i$. By the Riesz Decomposition Property --see \cite[Theorem 1.13]{AB}--, there exist $x_0,\ldots, x_m\in X$ satisfying $x=\sum_{i=0}^m x_i$ and $|x_i|\leq |y_i|$ for each $i=0,\ldots, m$. Thus, the vectors $(x_i)_{i=0}^m$ are also pairwise disjoint, and since $\|\cdot\|$ is a lattice norm, we have $\|x_i\|\leq \|y_i\|$ for each $i=0,\ldots, m$. This implies that $\vertiii{x}\leq \vertiii{y}$. Finally, it is easy to check that $\|x+y\|=\|x\|+\|y\|$ for every disjoint pair $x,y\in X$, so we conclude that $\vertiii{\cdot}$ is an AL-norm. By Kakutani's Theorem this finishes the proof.
\end{proof}

\begin{cor}\label{cor:Am-spaces}
Let $X$ be a Banach lattice which is an $\mathcal{L}_\infty$-space. Then $X$ is lattice isomorphic to an AM-space. 
\end{cor}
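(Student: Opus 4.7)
The plan is to apply Proposition \ref{p:BLL1} to the dual $X^*$ and then transfer the resulting AL-structure back to an AM-structure on $X$ via the canonical embedding into the bidual. Since $X$ is an $\mathcal{L}_\infty$-space, the classical duality for $\mathcal{L}_p$-spaces (see \cite{LP68,LR69}) gives that $X^*$ is an $\mathcal{L}_1$-space. As the dual of a Banach lattice, $X^*$ is itself a Banach lattice, so Proposition \ref{p:BLL1} applies and yields an equivalent norm $\vertiii{\cdot}^*$ on $X^*$ that is an AL-norm for the original dual lattice structure.

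Dualising, the dual norm $\vertiii{\cdot}^{**}$ induced by $\vertiii{\cdot}^*$ on $X^{**}$ is equivalent to the original bidual norm, and by Kakutani's AL--AM duality \cite[Theorem 4.23]{AB} it is an AM-norm on $X^{**}$ equipped with its canonical bidual lattice structure. The canonical embedding $J\colon X\hookrightarrow X^{**}$ is an isometric lattice homomorphism (a standard fact about Banach lattices), so it remains a bicontinuous lattice embedding after renorming. Consequently, the pullback norm $\vertiii{x}:=\vertiii{Jx}^{**}$ is an equivalent norm on $X$ under which $X$ sits as a closed sublattice of the AM-space $(X^{**},\vertiii{\cdot}^{**})$.

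Finally, a closed sublattice of an AM-space is again an AM-space, since the identity $\|x\vee y\|=\max\{\|x\|,\|y\|\}$ for positive elements is inherited by sublattices. Hence $(X,\vertiii{\cdot})$ is an AM-space, and therefore $X$ is lattice isomorphic to an AM-space in its original norm. The one point that requires care is keeping track of the various equivalent norms in play and making sure that the underlying lattice structure is fixed throughout: the AL--AM duality theorem, being a statement about Banach lattices rather than about a particular canonical norm, genuinely transfers the AM-structure from $X^{**}$ down to $X$ through $J$, and no subtler ingredient is needed beyond Proposition \ref{p:BLL1}.
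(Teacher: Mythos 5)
Your proposal is correct and follows essentially the same route as the paper: pass to $X^*$ (an $\mathcal{L}_1$-space), apply Proposition \ref{p:BLL1} to equip it with an equivalent AL-structure, dualise to obtain an AM-structure on $X^{**}$, and conclude via the fact that the canonical embedding $X\hookrightarrow X^{**}$ is a lattice embedding and that sublattices of AM-spaces are AM-spaces. The only cosmetic difference is that the paper phrases the bidual step through Kakutani's representation ($X^{**}$ lattice isomorphic to a $C(K)$-space) rather than directly through the AL--AM norm duality, which amounts to the same thing.
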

\begin{proof}
Since $X$ is an $\mathcal{L}_\infty$-space, its dual $X^*$ is an $\mathcal{L}_1$-space \cite[Theorem III (a)]{LR69}. By the previous proposition, $X^*$ is lattice isomorphic to an $L_1$-space and, thus, $X^{**}$ is lattice isomorphic to a $C(K)$-space. Since $X$ is a sublattice of $X^{**}$ \cite[Proposition 1.4.5 (ii)]{Meyer}, $X$ is lattice isomorphic to a sublattice of a $C(K)$-space, which is an AM-space.
\end{proof}

\begin{cor}\label{cor:L1}
Let $X$ be a complemented subspace in an $L_1$-space. If $X$ is isomorphic to a Banach lattice, then it is isomorphic to an $L_1$-space.
\end{cor}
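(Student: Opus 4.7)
The plan is to combine Corollary \ref{cor:Am-spaces} with a duality argument. Let $Y$ be a Banach lattice isomorphic to $X$; then $Y$ is itself complemented in some $L_1(\mu)$, and consequently $Y^*$ is complemented in $L_1(\mu)^{*}=L_\infty(\mu)$, which is an $\mathcal{L}_\infty$-space. Since complemented subspaces of $\mathcal{L}_\infty$-spaces remain $\mathcal{L}_\infty$-spaces, by the classical result of Lindenstrauss and Pelczynski \cite{LP68}, it follows that $Y^*$ is an $\mathcal{L}_\infty$-space; being the dual of a Banach lattice, it is itself a Banach lattice. Corollary \ref{cor:Am-spaces} then yields that $Y^*$ is lattice isomorphic to an AM-space.

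Dualizing once more and invoking the AL-AM duality together with Kakutani's theorem \cite[Theorem 4.27]{AB}, one sees that $Y^{**}$ is lattice isomorphic to some $L_1(\nu)$. Since the canonical inclusion $Y\hookrightarrow Y^{**}$ is a lattice embedding \cite[Proposition 1.4.5(ii)]{Meyer}, $Y$ is in turn lattice isomorphic to a closed sublattice $Z\subseteq L_1(\nu)$. As disjoint elements of $L_1(\nu)$ satisfy $\|z_1+z_2\|=\|z_1\|+\|z_2\|$, the norm inherited by $Z$ from $L_1(\nu)$ is already an AL-norm, so a further application of Kakutani provides a lattice isometry of $Z$ onto some $L_1(\tau)$. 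Consequently $X\cong Y \cong Z$ is isomorphic to an $L_1$-space.

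I expect the main obstacle to be justifying the Lindenstrauss--Pelczynski stability of the class $\mathcal{L}_\infty$ under complementation; a cleaner alternative would be to invoke directly the structure theorem for complemented subspaces of $L_1$-spaces, which asserts that any such infinite-dimensional subspace is either isomorphic to $\ell_2$ or to an $\mathcal{L}_1$-space \cite{LR69}, and to rule out the Hilbertian case by the well-known fact that $\ell_2$ is not complemented in $L_1$, concluding via a single application of Proposition \ref{p:BLL1}. In either route the finite-dimensional case is trivial, since every finite-dimensional Banach space is isomorphic to some $\ell_1^n$.
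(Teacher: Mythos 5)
Your proof is correct, but the fully written-out route is genuinely different from the paper's, while the ``cleaner alternative'' you sketch at the end is essentially the paper's argument. The paper's proof is two lines: by \cite[Theorem III (b)]{LR69}, a complemented subspace of an $L_1$-space is an $\mathcal{L}_1$-space --for $p=1$ the theorem as cited already excludes the Hilbertian alternative, so there is nothing to rule out-- and then Proposition \ref{p:BLL1} applied to the Banach lattice $Y\cong X$ gives the conclusion directly. Your main argument instead dualizes: you use the $\mathcal{L}_\infty$-version of the same stability theorem --which is due to Lindenstrauss and Rosenthal \cite{LR69} rather than \cite{LP68}, precisely the point you flagged as the main obstacle-- to see that the Banach lattice $Y^*$ is an $\mathcal{L}_\infty$-space, apply Corollary \ref{cor:Am-spaces} to $Y^*$, and then descend from $Y^{**}$, lattice isomorphic to some $L_1(\nu)$, back to $Y$ through the canonical lattice embedding and the fact that a closed sublattice of an $L_1$-space is an AL-space. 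This is valid but amounts to a round trip: Corollary \ref{cor:Am-spaces} is itself proved by dualizing down to Proposition \ref{p:BLL1}, so in effect you apply that proposition to $Y^{**}$ instead of to $Y$. Two small points: the paper's definition of AL-space asks for additivity of the norm on all positive pairs, not merely disjoint ones (harmless here, since the $L_1(\nu)$-norm is additive on the entire positive cone and this passes to sublattices); and the finite-dimensional case is indeed trivial, as you say.
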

\begin{proof}
Since $X$ is complemented in an $L_1$-space, by \cite[Theorem III (b)]{LR69} we get that $X$ is an $\mathcal{L}_1$-space. If $X$ is also isomorphic to a Banach lattice, the preceding proposition ensures that $X$ is isomorphic to an $L_1$-space.
\end{proof}

\begin{cor}\label{cor:CK}
Let $X$ be a separable complemented subspace in a $C(K)$-space. If $X$ is isomorphic to a Banach lattice, then it is isomorphic to a $C(K)$-space.
\end{cor}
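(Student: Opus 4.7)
The plan is to mirror the argument of Corollary \ref{cor:L1}, replacing the role of $L_1$- and AL-spaces by that of $C(K)$- and AM-spaces. Since $X$ is complemented in a $C(K)$-space, \cite[Theorem III (b)]{LR69} implies that $X$ is an $\mathcal{L}_\infty$-space. Combined with the assumption that $X$ is isomorphic to a Banach lattice, Corollary \ref{cor:Am-spaces} then yields that $X$ is isomorphic to some AM-space $Y$.

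It remains to show that a separable AM-space $Y$ is isomorphic to a $C(K)$-space. The natural route is via Kakutani's representation theorem, which embeds $Y$ lattice-isometrically as a closed vector sublattice of $C(L)$ for some compact Hausdorff $L$; separability of $Y$ allows $L$ to be chosen metrizable (for instance, $L$ may be taken as the set of lattice homomorphisms of norm at most $1$ in $B_{Y^*}$, which is weak$^*$-compact and, under separability of $Y$, weak$^*$-metrizable). The classical structure theorem for closed vector sublattices of $C(L)$ then identifies $Y$, up to lattice isometry, with $C_0(L')$ for some locally compact metrizable space $L'$, and standard results on separable $C(K)$-spaces ensure that such a $C_0(L')$ is isomorphic to a $C(K')$-space for some compact Hausdorff $K'$.

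The first two steps are immediate consequences of results already developed in this section. The main obstacle, as in the $L_1$-case one level deeper, is the last identification of a separable AM-space with a $C(K)$-space, which requires going beyond the abstract Kakutani representation and invoking the structural theory of closed vector sublattices of $C(K)$-spaces together with the classification of separable $C(K)$-spaces up to isomorphism.
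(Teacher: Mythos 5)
Your first two steps coincide with the paper's argument: complementation in a $C(K)$-space makes $X$ an $\mathcal{L}_\infty$-space, and Corollary \ref{cor:Am-spaces} then gives that $X$ is isomorphic to an AM-space. (A minor citation point: the paper uses \cite[Theorem 3.2]{LR69} here; \cite[Theorem III (b)]{LR69} is the $\mathcal{L}_1$ statement used in Corollary \ref{cor:L1}.)

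The final step, however, has a genuine gap. There is no ``classical structure theorem'' identifying a closed vector sublattice of $C(L)$ with a $C_0(L')$-space up to lattice isometry. The correct description of closed sublattices of $C(L)$ --- recalled in Section 4 of this paper --- is the Stone--Kakutani one: they are exactly the sets $\{f\in C(L): f(t_\alpha)=\lambda_\alpha f(t'_\alpha) \text{ for all } \alpha\in\Gamma\}$ for a family of triples with $\lambda_\alpha\geq 0$, and such spaces need not be $C_0$-spaces in any lattice-isometric (or even obviously isomorphic) sense; already $\{f\in C[0,1]: f(0)=\tfrac12 f(1)\}$ is a closed sublattice that is not an order ideal. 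What you appear to be invoking is the description of closed order \emph{ideals} of $C(L)$, which are indeed of the form $C_0(U)$ for $U\subseteq L$ open; but Kakutani's theorem only realizes an AM-space as a sublattice, not as an ideal. The assertion that every separable AM-space is isomorphic, as a Banach space, to a $C(K)$-space is a nontrivial theorem of Benyamini on separable $G$-spaces \cite{B}, and it is precisely this reference that the paper uses to close the proof; it does not follow from soft representation-theoretic considerations. Once that citation replaces your last paragraph, the argument is exactly the paper's.
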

\begin{proof}
 By \cite[Theorem 3.2]{LR69} complemented subspaces of $C(K)$-spaces are $\mathcal{L}_\infty$-spaces. If $X$ is also isomorphic to a Banach lattice, Corollary \ref{cor:Am-spaces} shows that $X$ is isomorphic to an AM-space. The conclusion follows from the fact that separable AM-spaces are isomorphic to $C(K)$-spaces \cite{B}.
 \end{proof}

\begin{rem}\label{rem:ramifications}
Corollaries \ref{cor:L1} and \ref{cor:CK} imply that a positive answer to the CSP for Banach lattices in the separable setting would also yield a positive answer to the CSP for $L_1$-spaces and for $C(K)$-spaces in the separable setting.
\end{rem}

\begin{rem}
Proposition \ref{p:BLL1} can also be used to show that if a Banach lattice $X$ is linearly isomorphic to $\ell_1$, then it must be lattice isomorphic to $\ell_1$ --see also \cite{AW75}. Note that this does not extend to isometries, in the following sense: A Banach lattice linearly isometric to $\ell_1$ need not be lattice isometric. In fact, the proof of Proposition \ref{p:BLL1} tells us that if a Banach lattice $X$ is linearly isometric to $\ell_1$, then it is $K_G^2$-lattice isomorphic to $\ell_1$ --where $K_G$ is Grothendieck's constant. Moreover, in the 2-dimensional setting it can be checked that the isomorphism constant is sharp: take $\ell_\infty^2$ which is linearly isometric to $\ell_1^2$, and one can check that any lattice isomorphism will give constant at least 2 --which coincides with the square of Grothendieck's constant for dimension 2 \cite{kri}.
\end{rem}

\begin{rem}
It would be natural to wonder whether Proposition \ref{p:BLL1} and Corollary \ref{cor:Am-spaces} can be extended to $\mathcal{L}_p$-spaces for $p\in[1,+\infty]$. This is however not the case, since for $p\in(1,+\infty)\setminus\{2\}$, $\ell_p\oplus_p \ell_2$ is a Banach lattice which is also an $\mathcal{L}_p$-space, but it is not isomorphic --even as a Banach space-- to any $L_p$-space \cite[Example 8.2]{LP68}.
\end{rem}

\section{The Plebanek-Salguero space}
\label{SectionTheSpace}

Let us denote by $\omega=\{0,1,2,...\}$ the set of non-negative integers, and write $\fin(\omega)$ for the family of all finite subsets of $\omega$. Given $\mathcal F \subseteq \cP(\omega)$, $[\mathcal F]$ denotes the smallest  Boolean subalgebra of $\cP(\omega)$ containing $\mathcal F$.

\par A family $\cA$ of infinite subsets of $\omega$ is \emph{almost disjoint} whenever $A\cap B$ is finite for every distinct $A, B\in \cA$.
Every almost disjoint family $\cA$ gives rise to a \emph{Johnson-Lindenstrauss} space  $\JL(\cA)$, which is the closed linear span inside $\ell_\infty$ of the set of characteristic functions $\{1_n: n\in \omega\} \cup \{1_A: A\in \cA\}\cup \{1_\omega\}$, where $1_n$ represents $1_{\{n\}}$.  
Alternatively, let us write $\fA = [\fin(\omega) \cup \cA]$. Then $\JL(\cA)$ is precisely the closure in $\ell_\infty$ of the subspace $s(\fA)$ consisting of all \emph{simple} $\fA$-measurable functions; that is, functions of the form $f=\sum_{i=1}^n a_i\cdot 1_{B_i}$, where $n\in \omega$, $a_i\in \R$ and $B_i\in \fA$. 

\par It is easy to check that $\JL(\cA)$ is isometrically isomorphic to a $C(K)$-space --see \cite[Theorem 1.b.6]{LT2}. The underlying compact space can be realized as the Stone space consisting of all ultrafilters of $\fA$. More explicitly, we can define 
\[ K_\cA = \omega \cup \{p_A: A\in \cA\} \cup \{\infty\}\]
and specify a topology on $K_\cA$ as follows:
\begin{itemize}
    \item points in $\omega$ are isolated;
    \item given $A\in \cA$, a basic neighbourhood of $p_A$ is of the form $\{p_A\}\cup A\setminus F$, where $F\in \fin(\omega)$;
    \item $K_\cA$ is the one-point compactification of the locally compact space $\omega \cup \{p_A: A\in \cA\}$. 
\end{itemize}
The compact space $K_\cA$ is often referred to as the \emph{Alexandrov-Urysohn compact space} associated with $\cA$. It is a separable, scattered compact space with empty third derivative. 
Please observe that $\JL(\cA)$ coincides with the subspace $\{f|_\omega: f\in C(K_\cA)\}$ of $\ell_\infty$. 

\par On the other hand, the dual of $\JL(\cA)$ is isometrically isomorphic to the space $M(\fA)$ of real-valued \emph{finitely} additive measures on $\fA$. Indeed, every $\mu\in M(\fA)$ defines a functional on $s(\fA)$ by means of integration \cite[III.2]{DS}, and every functional on $\JL(\cA)$ arises in this way. Let us recall that the norm of any measure $\nu\in M(\fA)$ is given by $\|\nu\| = |\nu|(\omega)$, where the \emph{variation} $|\nu|$ is defined as
\[ |\nu|(A) = \sup\{|\nu(B)| + |\nu(A\setminus B)|: B\in \fA, B\subseteq A\}. \]
In particular, since $\JL(\cA)$ is isometrically isomorphic to $C(K_\cA)$, and $K_\cA$ is scattered, $M(\fA)$ is isometrically isomorphic to $\ell_1(K_\cA)=\overline{\text{span}}\{\delta_n,\delta_{P_A},\delta_\infty\::\:n\in\omega,\,A\in\mathcal{A}\} \equiv \ell_1(\omega) \oplus_1 \ell_1(\cA) \oplus_1 \R$.
Therefore, every $\nu\in M(\fA)$ can be decomposed as $\nu = \mu + \bar\nu$, where $\mu$ is supported on $\omega$ and $\bar\nu$ is an element of $M(\fA)$ which vanishes on finite sets of $\omega$.

\par The spaces $\JL(\cA)$, originally introduced in \cite{JL1974}, have recently found use in Banach space theory as counterexamples or as a tool to produce them --see for instance \cite{AMR, KL, MP, PSA1}. They were in particular used in \cite{PSA} to obtain a negative solution for the complemented subspace problem for $C(K)$-spaces. 

\subsection{General facts}
\label{subsec:general}
\par The approach in \cite{PSA} is to construct two almost disjoint families $\cA, \cB\subseteq \cP(\omega\times 2)$ such that the corresponding Johnson-Lindenstrauss spaces enjoy the following properties, as stated in \cite[Theorem 1.3]{PSA}:
\begin{itemize}
    \item $\JL(\cB)$ is isomorphic to $\JL(\cA)\oplus \PS$, where both $\JL(\cA)$ and $\PS$ are isometric to $1$-complemented subspaces of $\JL(\cB)$.
    \item $\PS$ is not isomorphic to any $C(K)$-space.
\end{itemize}

\par Let us describe how such families $\cA$ and $\cB$ are defined and how they interact with each other. For this, we will work in the countable set $\omega \times 2$ rather than in $\omega$. Let us say that a subset $C\subseteq \omega \times 2$ is a \emph{cylinder} if it is of the form $C=C_0\times 2$ for some $C_0\subseteq \omega$. Given $n\in \omega$, let us write $c_n = \{n\}\times 2$. A partition $B^0, B^1$ of a cylinder $C=C_0\times 2$ \emph{splits} $C$ (or is a \emph{splitting} of $C$) if for every $n\in C_0$, the sets $B^0 \cap c_n$ and $B^1 \cap c_n$ are singletons.

\par Consider two almost disjoint families $\cA$ and $\cB$ such that: 
 \begin{itemize}
     \item $\cA = \{A_\xi: \xi < \fc\}$ is a family of cylinders in $\omega \times 2$.
     \item $\cB = \{B_\xi^0, B_\xi^1: \xi<\fc\}$ satisfies that the pair $B_\xi^0$, $B_\xi^1$ is a splitting of $A_\xi$ for $\xi<\fc$.
 \end{itemize}   
In this context, we will adopt the slight abuse of notation of \cite[Section 3]{PSA} by declaring $\JL(\cA)$ to be the closed subspace of $\ell_\infty(\omega \times 2)$ spanned by  $\{c_n: n\in \omega\} \cup \{1_A: A\in \cA\}\cup \{1_{\omega \times 2}\}$; that is to say, in the definition of $\JL(\cA)$ we consider only finite \textsl{cylinders} instead of all finite subsets of $\omega \times 2$. With these considerations, it is straightforward to see that $\JL(\cA)$ sits inside $\JL(\cB)$ as the subspace formed by all functions of $\JL(\cB)$ which are constant on cylinders, and the map 
$P: \JL(\cB) \to \JL(\cB)$ defined as 
\[ Pf(n,0) = Pf(n,1) = \frac{1}{2}\big(f(n,0)+f(n,1)\big)\]
is a norm-one projection whose image is $\JL(\cA)$ \cite[Proposition 3.1]{PSA}. Let us write $X=\ker P$, so that we have $\JL(\cB) = \JL(\cA) \oplus X$. Then the map $Q = \Id_{\JL(\cB)}- P$ acts as
 \[ Qf(n,0) = - Qf(n,1) = \frac12\big(f(n,0) - f(n,1)\big),\]
and so it is a norm-one projection onto $X$. Therefore both $X$ and $\JL(\cA)$ are isometric to $1$-complemented subspaces of $\JL(\cB)$, and the space $X$ can be defined as follows: 
\begin{equation} \label{eq:Csigma} X = \{f\in \JL(\cB): f(n,0)=-f(n,1) \ \text{ for all } n\in \omega\}. \end{equation}

\par In order to ensure that $X$ is not isomorphic to a $C(K)$-space, the families $\cA$ and $\cB$ will be chosen to satisfy certain delicate combinatorial properties. Actually, the space which we denote by $\PS$ is such a space $X$ for a particular choice of $\cA$ and $\cB$ such that $X$ is not a $C(K)$-space.

\subsection{Norming and free subsets}\label{s:normingfree}
\par We now focus on how to produce the almost disjoint families $\cA$ and $\cB$, intertwined as in Section \ref{subsec:general}, so that the resulting space $X$ is not isomorphic to a $C(K)$-space. These techniques will be essential for our counterexample below. We start with the basic idea.

\label{subsec:norming}
\begin{defin} Given a Banach space $X$ and a weak$^*$ closed subset $K$ of $B_{X^*}$, we say that
\begin{itemize}
	\item $K$ is \emph{norming} for $X$ if there is $0<c\leq 1$ such that $\sup_{x^*\in K} |\bil{x^*}{x}|\geq c\, \|x\|$ for every $x\in X$.
	\item $K$ is \emph{free} if for every $f\in C(K)$ there exists $x\in X$ such that $f(x^*)=\bil{x^*}{x}$ for every $x^*\in K$.
\end{itemize}
\end{defin}
We will speak of a \emph{$c$-norming} set for $X$ whenever we need explicit mention of the constant $c$ in the definition of a norming set. Observe that $K$ is a norming and free subset of $B_{X^*}$ precisely when the natural operator $T: X \to C(K)$ defined by $T(x)(x^*)=\bil{x^*}{x}$ is an (onto) isomorphism \cite[Lemma 2.2]{PSA1}. 

\par The use of norming free sets makes it possible to construct certain Banach spaces which are not $C(K)$-spaces. Applied to our particular case, the idea is to prevent every candidate for a norming set in the dual ball of $X$ to be free. We now indicate how to proceed. First, set $\fB := [\fin(\omega) \cup \cB]$ and observe that, for any choice of the families $\cA$ and $\cB$ as in Section \ref{subsec:general}, $X^*$ can be isometrically identified \textit{in a canonical way} with the subspace $\JL(\cA)^\perp=\{\nu\in JL(\cB)^*=M(\fB)\::\: \left.\nu\right|_{\JL(\cA)}=0\}$ of $M(\fB)$, that is, $\JL(\cA)^\perp$ is formed by all measures of $M(\fB)$ which vanish on every cylinder. Indeed, let us consider the operator $T:\JL(\cA)^\perp\to X^*$ defined by $T\nu:=\left.\nu\right|_X$. Given $x^*\in X^*$, we define $\nu=Q^*x^*\in\JL(\cB)^*$, which satisfies that for every $f\in\JL(\cA)$, $\nu(f)=Q^*x^*(f)=x^*(Qf)=x^*(0)=0$ (and so $\nu\in\JL(\mathcal{A})^\perp$) and $\left.\nu\right|_X=x^*
$ given that $Qf=f$ for all $f\in X$). The latter shows that $T$ is surjective and it remains to show that $T$ is norm-preserving. Given any $\nu \in \JL(\cA)^\perp$, we have
\begin{eqnarray*}
\|T\nu\|&=&\sup_{f\in B_X}\|\nu(f)\|\leq \sup_{f\in B_{\JL(\cB)}}\|\nu(f)\|=\sup_{f\in B_{\JL(\cB)}}\|\nu(Pf)+\nu(Qf)\| \\
&=&\sup_{f\in B_{\JL(\cB)}}\|\nu(Qf)\|\leq \sup_{f\in B_X}\|\nu(f)\|=\|T\nu\|,    
\end{eqnarray*}
so $\|T\nu\|=\sup_{f\in B_{\JL(\cB)}}\|\nu(f)\|=\|\nu\|$.

Therefore, every functional $\nu \in X^*\equiv \JL(\cA)^\perp\subseteq \JL(\cB)^*\equiv \ell_1(\omega\times 2)\oplus_1 \ell_1(\mathcal{B})\oplus_1\mathbb{R}$
can be seen as a pair of measures $(\mu, \bar\nu) \in \ell_1(\omega\times 2) \oplus_1 \ell_1(\fc\times 2)$ where $\mu(n,0)=-\mu(n,1)$ for every $n\in \omega$ and $\bar\nu(\xi,0)=-\bar\nu(\xi,1)$ for every $\xi<\fc$, where here we are using the notation $\mu(n,i):=\mu\bigl(\{(n,i)\}\bigr)$ and $\bar\nu(\xi,i):=\bar\nu(B_\xi^i)$, for $i\in\{0,1\}$.

On the other hand, for every $n\in \omega$, the function
\begin{equation}\label{eq:fn's}
 f_n = 1_{(n,0)}-1_{(n,1)}    
\end{equation}
is always a norm-one element of $X$. Hence, every norming set for $X$ must contain a sequence of functionals  $(\nu_n)_{n\in \omega}$ such that $\inf_{n\in \omega} |\nu_n(n,0)|=\frac{1}{2}\inf_{n\in\omega} |\nu_n(f_n)|>0$. This motivates the following definition:

\begin{defin}\label{def:admissible-real} A bounded sequence $(\mu_n)_{n\in \omega}$ in $\ell_1(\omega\times 2)$ is \emph{admissible} if 
\begin{itemize}
    \item $\inf_{n\in \omega} |\mu_n(n,0)|>0$.
    \item $\mu_k(n,0)=-\mu_k(n,1)$ for every $k,n\in \omega$.
\end{itemize}
\end{defin}
\begin{rem}\label{rem:sequences-to-code}
Consequently, every norming set for $X$ must contain a sequence $(\nu_n)_{n\in\omega}=(\mu_n, \bar\nu_n)_{n\in \omega}\in\ell_1(\omega\times 2)\oplus_1\ell_1(\mathfrak{c}\times 2)$ such that:
\begin{enumerate}
    \item[\emph{i)}] $(\mu_n)_{n\in \omega}$ is admissible;
     \item[\emph{ii)}] $\bar\nu_n(\alpha,0)+\bar\nu_n(\alpha,1)=0$ for every $n\in\omega$ and $\alpha<\fc$;
      \item[\emph{iii)}] Since for every natural $n$, $|\bar{\nu_n}|(B)>0$ for at most countably many $B\in\mathcal{B}$, then there exists $\xi<\mathfrak{c}$ such that $\bar\nu_n(\alpha,i)=0$ for all $\alpha\geq \xi$, $n\in\omega$ and $i\in\{0,1\}$.
\end{enumerate}
\end{rem}

The main idea of \cite{PSA} is to prevent every sequence $(\nu_n)_{n\in\omega}$ contained in $B_{\JL(\cA)^\perp}$ of the form described in the preceding remark from lying inside a free subset in the dual unit ball of $\PS$. In this way, there are no norming free subsets for $\PS$, and therefore it cannot be isomorphic to a $C(K)$-space. 

\par Our subsequent argument also makes use of admissible sequences to prove that $\PS$ is, in fact, not isomorphic to a Banach lattice. This proof relies on the following simple observation:

\begin{lem}{\cite[Lemma 3.3]{PSA}}\label{lem:no-free} Assume $\fB$ is a Boolean subalgebra of $\cP(\omega)$. If $M\subseteq M_1(\fB)=B_{\JL(\cB)^*}$ lies inside a free subset, then for every $B\in \fB$ and $\eps>0$ there is $g\in s(\fB)$ such that $\big|\bil{\mu}{g} - |\mu(B)|\big|<\eps$ for every $\mu\in M$.
\end{lem}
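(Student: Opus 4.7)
The plan is to use the free property directly on the natural continuous function $\mu \mapsto |\mu(B)|$ and then approximate in sup-norm by simple functions. Let $K \subseteq B_{\JL(\cB)^*}$ be a weak$^*$-closed free subset containing $M$.

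First, I would observe that for any fixed $B \in \fB$, the function $\mu \mapsto \mu(B) = \bil{\mu}{1_B}$ is weak$^*$-continuous on $B_{\JL(\cB)^*}$, because $1_B$ belongs to $s(\fB) \subseteq \JL(\cB)$. Consequently, $\mu \mapsto |\mu(B)|$ is a weak$^*$-continuous real-valued function on $K$, i.e.\ an element of $C(K)$ (recall $K$ is weak$^*$-compact).

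Second, since $K$ is free, by definition there exists $g_0 \in \JL(\cB)$ such that
\[ \bil{\mu}{g_0} = |\mu(B)| \quad \text{for every } \mu \in K,\]
and in particular for every $\mu \in M$. At this point the functional identity holds exactly, but we still need $g_0$ to be replaced by an element of $s(\fB)$, since a priori $g_0$ is only in the closure of $s(\fB)$ inside $\ell_\infty$.

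Third, since $s(\fB)$ is dense in $\JL(\cB)$ with respect to the supremum norm, pick $g \in s(\fB)$ with $\|g-g_0\|_\infty < \eps$. Then for every $\mu \in M \subseteq B_{\JL(\cB)^*}$,
\[ \big|\bil{\mu}{g} - |\mu(B)|\big| = \big|\bil{\mu}{g-g_0}\big| \leq \|\mu\|\,\|g-g_0\|_\infty < \eps,\]
which is exactly the required conclusion. There is no substantial obstacle here; the proof is a direct unpacking of the definitions of \emph{free} subset and of $\JL(\cB)$ as the closure of $s(\fB)$. The only point worth emphasizing is that the transition from the abstractly existing element $g_0 \in \JL(\cB)$ to a concrete simple function $g \in s(\fB)$ is precisely what forces the appearance of the arbitrary $\eps>0$ in the statement.
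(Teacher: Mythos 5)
Your proof is correct and is essentially the same argument as the one given for \cite[Lemma 3.3]{PSA} (the present paper only cites that result without reproving it): apply freeness to the weak$^*$-continuous function $\mu\mapsto|\mu(B)|=|\bil{\mu}{1_B}|$ to get an exact representative $g_0\in\JL(\cB)$, then use the density of $s(\fB)$ and $\|\mu\|\leq 1$ to pass to a simple function within $\eps$. No gaps.
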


\subsection{Separation of measures}
Let $M_1(\fB)$ denote the unit ball of the space $M(\fB)$.
\label{subsec:separation}
\begin{defin} \label{def:sep} Given  a Boolean subalgebra $\fB$ of $\cP(\omega\times 2)$, two subsets of measures $M, M'\subseteq M_1(\fB)$ are \emph{$\fB$-separated} if there is $\eps>0$ and a finite collection $B_1, ..., B_n\in \fB$ such that for every pair $(\mu, \mu')\in M\times M'$, there is $k\in\{1,...,n\}$ such that $|\mu(B_k)-\mu'(B_k)|\geq \eps$.
\end{defin}

\par The notion of $\fB$-separation is essential in the construction of $\PS$. In particular, Definition \ref{def:sep} in tandem with Lemma \ref{lem:no-free} is what prevents a certain sequence of measures from lying inside a free set. We will also need the following fact about $\fB$-separation  to show that $\PS$ is not isomorphic to a Banach lattice: 
\begin{lem}{\cite[Lemma 4.2]{PSA}} \label{lem:sep}
Let $M,\, M'\subseteq M_1(\fB)$ be two sets of measures.
    If there exist $\eps>0$ and a simple $\fB$-measurable function $g$ such that for every $(\mu, \mu')\in M\times M'$ we have $|\bil{\mu}{g}-\bil{\mu'}{g}|\geq \varepsilon$, then $M$ and $M'$ are $\fB$-separated. 
\end{lem}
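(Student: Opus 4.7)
The plan is to reduce everything to the standard disjoint representation of the simple function $g$. Write $g=\sum_{k=1}^n a_k\cdot 1_{B_k}$, where the sets $B_1,\ldots,B_n\in\fB$ are pairwise disjoint (this is the canonical form of a simple $\fB$-measurable function, obtained by refining any initial representation against the finite Boolean subalgebra generated by the level sets of $g$). I may discard any index with $a_k=0$, so without loss of generality $a_k\neq 0$ for each $k$; set $C=\max_{1\leq k\leq n}|a_k|>0$.

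The next step is a pigeonhole estimate. For every pair $(\mu,\mu')\in M\times M'$, one has
\[
\varepsilon\;\leq\;\big|\bil{\mu}{g}-\bil{\mu'}{g}\big|\;=\;\Big|\sum_{k=1}^{n} a_k\bigl(\mu(B_k)-\mu'(B_k)\bigr)\Big|\;\leq\;\sum_{k=1}^{n}|a_k|\,\bigl|\mu(B_k)-\mu'(B_k)\bigr|.
\]
Hence there must be some index $k\in\{1,\ldots,n\}$, depending on the pair $(\mu,\mu')$, such that
\[
|a_k|\,\bigl|\mu(B_k)-\mu'(B_k)\bigr|\;\geq\;\frac{\varepsilon}{n},
\]
and therefore $|\mu(B_k)-\mu'(B_k)|\geq \varepsilon/(nC)$.

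Setting $\varepsilon'=\varepsilon/(nC)>0$, this is precisely the definition of $\fB$-separation with the finite collection $B_1,\ldots,B_n\in\fB$: for every pair $(\mu,\mu')\in M\times M'$ there exists $k\in\{1,\ldots,n\}$ with $|\mu(B_k)-\mu'(B_k)|\geq\varepsilon'$. There is no genuine obstacle here; the only point that requires a word is the passage to a disjoint representation of $g$, which is needed because the inequality $\bigl|\sum a_k c_k\bigr|\leq \sum|a_k||c_k|$ is then unambiguous and the constant $C=\|g\|_\infty$ is intrinsic. The rest is a single pigeonhole step.
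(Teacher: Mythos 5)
Your proof is correct. Note that the paper itself does not supply an argument for this statement --- it simply cites \cite[Lemma 4.2]{PSA} --- but your reduction to a disjoint representation $g=\sum_{k=1}^n a_k 1_{B_k}$ followed by the pigeonhole step yielding the uniform constant $\varepsilon/(nC)$ with the fixed witnessing collection $B_1,\ldots,B_n$ is exactly the standard argument, and every step (including the identity $\bil{\mu}{g}=\sum_k a_k\mu(B_k)$ for finitely additive $\mu$) is valid.
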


\subsection{The heart of the construction of $\PS$}
\label{subsec:heart}

The almost disjoint families $\cA$ and $\cB$ are constructed through an inductive process of length $\fc$ which is explained in \cite[Section 6]{PSA}. Let us now describe this process paying special attention to the properties that will be needed later to show that $\PS$ is not isomorphic to a Banach lattice. 
\par Recall that the idea is to construct a family $\cA = \{A_\xi: \xi <\fc\}$ of cylinders in $\omega \times 2$ and define suitable splittings $B_\xi^0, B_\xi^1$ of $A_\xi$ for every $\xi<\fc$. Given any $\Lambda\subseteq \fc$, we will denote $\fB(\Lambda) = [\fin(\omega \times 2) \cup \{B_\alpha^0, B_\alpha^1\colon \alpha\in \Lambda\}]$. In particular, $\fB(\xi)$ stands for $\fB(\{\alpha: \alpha<\xi\})$, and the final algebra is denoted $\fB=[\fin(\omega \times 2) \cup\{B_\alpha^0, B_\alpha^1\colon \alpha<\fc\}]$.

\par Let us explain how the sets $B_\xi^0, B_\xi^1$ are obtained for any given $\xi<\fc$. First, observe that we can ``code" \textit{all} sequences in $M_1(\fB)$ of the form detailed in Remark \ref{rem:sequences-to-code}, $(\nu_n^\xi)_{n\in\omega}=(\mu_n^\xi, \bar\nu_n^\xi)_{n\in \omega}$ for $\xi<\fc$, in such a way that $\bar\nu_n^\xi(\alpha,i)=0$ for all $\alpha\geq\xi$, $i=0,1$ and every $n\in\omega$. 
Moreover, every sequence $(\nu_n)_{n\in \omega}$ in the unit ball of $\ell_1(\omega \times 2) \oplus_1 \ell_1(\fc \times 2)$ which satisfies properties \emph{i)}--\emph{iii)} of the aforementioned remark is of the form $(\nu_n^\xi)_{n\in\omega}$ for exactly one $\xi<\fc$.


\par The sets $B_\xi^0$ and $B_\xi^1$ are defined, together with three infinite auxiliary subsets $J_2^\xi \subseteq J_1^\xi \subseteq J_0^\xi \subseteq \omega$, so that the sequence $(\nu_n^\xi)_{n\in\omega}$ cannot eventually lie in a free set of $M_1(\fB)$. This is done as follows. First, let us define $c=\inf_{n\in \omega}|\mu_n^\xi(n,0)|$, which is a strictly positive number, and let $p_n^\xi$ be the one element subset of $c_n=\{(n,0),(n,1)\}$ for which $\mu_n^\xi(p_n^\xi)>0$. Now, consider three infinite subsets $J_2^\xi \subseteq J_1^\xi \subseteq J_0^\xi \subseteq \omega$ such that the differences $\omega\setminus J_0^\xi$, $J_0^\xi\setminus J_1^\xi$ and $J_1^\xi\setminus J_2^\xi$ are also infinite, and in such a way that the following assertions are verified for some fixed $\delta \in (0, \frac{c}{16})$:

\begin{enumerate}
    \item[(P1)] For every  $n\in J_0^\xi$, $|\mu_n^\xi|\bigl( (J_0^\xi \times 2)\setminus c_n\bigr)<\delta$ --this is exactly (5.a) in the proof of \cite[Lemma 5.3]{PSA}.
	\item[(P2)] There is $a\geq c$ such that $|\mu_n^\xi(p_n^\xi)-a|<\delta$ for every $n\in J_1^\xi$ --this is (5.b) in the proof of \cite[Lemma 5.3]{PSA}.
    \item[(P3)] For any $\alpha < \xi$, the pairs
	\begin{itemize}
		\item $\{\nu_n^\alpha: n\in J^\alpha_2\}$ and $\{\nu_n^\alpha: n\in J^\alpha_1\setminus J^\alpha_2\}$,
		\item $\{\nu_n^\alpha: n\in J^\alpha_1\}$ and $\{\nu_n^\alpha: n\in J^\alpha_0 \setminus J^\alpha_1\}$ 
	\end{itemize}
	are not $\fB(\xi \setminus \{\alpha\})$-separated --this is exactly the Key Property in \cite[p. 16]{PSA}.
\end{enumerate}

The justification of the existence of such a trio of sets can be found in \cite[p. 16]{PSA}. We also remark the fact that although the computations in \cite[Lemma 5.3]{PSA} require that $0<\delta<c/16$, in the proof of our main Theorem \ref{thm:main} we will only need to assume that $\delta < c/11$. In any case, with the sets $J^\xi_0, J_1^\xi$ and $J_2^\xi$ in our power, we finally declare:
\[ B_\xi^0 = \left(\bigcup_{n\in J^\xi_2} p_n^\xi \right) \cup \left(\bigcup_{n\in J^\xi_1 \setminus J^\xi_2} c_n \setminus p_n^\xi \right), \quad B_\xi^1 = (J_1^\xi \times 2) \setminus B_\xi^0.\] 

\par To conclude with the construction, one needs to ensure that, as a consequence of (P1)--(P3), $(\nu_n^\xi)_{n\in\omega}$ cannot lie inside a free subset of $M(\fB)$. This is taken care of in \cite[Lemmata 5.3 and 5.5]{PSA}. Our proof that $\PS$ is not isomorphic to a Banach lattice is also substantially based on Properties (P1)--(P3). We now record two observations which will pave the way in the next Sections. 

\begin{rem} \label{rem:final}
The final algebra $\fB$ is such that (P3) is satisfied for every $\xi<\fc$. This immediately implies that, given any $\xi<\fc$, the pairs 	\begin{itemize}
		\item $\{\nu_n^\xi: n\in J^\xi_2\}$ and $\{\nu_n^\xi: n\in J^\xi_1\setminus J^\xi_2\}$,
		\item $\{\nu_n^\xi: n\in J^\xi_1\}$ and $\{\nu_n^\xi: n\in J^\xi_0 \setminus J^\xi_1\}$ 
	\end{itemize}
are not $\fB(\fc\setminus\{\xi\})$-separated. 
\end{rem}

In the sequel, the following remark will be applied to functions of the form $f = 1_{B_0^\xi} - 1_{B_1^\xi}$ for a given $\xi<\fc$.
\begin{rem} \label{rem:zero} Fix any $\xi<\fc$ and consider the sequence $(\nu_n^\xi)_{n\in \omega}$ in $M_1(\fB)$. Since $\bar\nu_n^\xi(\alpha, i)=0$ for every $\alpha \geq \xi$ and $i\in\{0,1\}$, we have $|\bar\nu_n^\xi|(B_\alpha^i) = 0$ whenever $\alpha\geq \xi$ and $i\in \{0,1\}$. Hence, for any $n\in \omega$, $\nu_n^\xi$ agrees with $\mu_n^\xi$ inside any set $B_\alpha^i$ for $\alpha \geq \xi$ and $i\in \{0,1\}$. In particular, if $f \in \PS$ has its support contained in the set $B^0_\xi \cup B^1_\xi$, then $\langle \nu^\xi_n, f \rangle = \langle \mu^\xi_n,f \rangle$ for every $n\in \omega$. 
\end{rem}

\section{The Plebanek-Salguero space is not a Banach lattice}
\label{SectionPSnoBL}

\par We now combine the results in Section \ref{SectionAuxiliaryBL} with the fundamental properties of the space $\PS$ to show that it is not linearly isomorphic to any Banach lattice. 
First of all, notice that $\PS^*$ is a $1$-complemented subspace of $\JL(\cB)^*\equiv C(K_\cB)^*\equiv\ell_1(K_\cB)$ and therefore linearly isometric to $\ell_1(\Gamma)$ for some $\Gamma$.
Furthermore, since the set $\{\delta_{(n,0)}, \delta_{(n,1)}:n\in \omega\}$ is $1$-norming for $\JL(\cB)$,  just taking the restrictions to $\PS$ we deduce that $\PS$ also has a countable $1$-norming set. These two characteristics of $\PS$ will make easier to prove that this space cannot be isomorphic to a Banach lattice, as the next proposition shows. 

\begin{prop}\label{prop:simplification}
Let $X$ be an isomorphic predual of $\ell_1(\Gamma)$ which has a countable norming set. If $X$ is isomorphic to a Banach lattice, then it is isomorphic to a sublattice of $\ell_\infty$.
\end{prop}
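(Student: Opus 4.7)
Let $\Phi\colon X \to Y$ be an isomorphism onto a Banach lattice $Y$. The plan is to exhibit a lattice isomorphism of $Y$ onto a sublattice of $\ell_\infty$; composed with $\Phi$, this gives the desired Banach-space isomorphism of $X$ with a sublattice of $\ell_\infty$.

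First, since $X^*\cong \ell_1(\Gamma)$ is an $\mathcal{L}_1$-space, $X$ -- and therefore $Y$ -- is an $\mathcal{L}_\infty$-space, so by Corollary \ref{cor:Am-spaces} I may assume $Y$ is an AM-space. Its dual $Y^*$ is then an AL-space, and also an $\mathcal{L}_1$-space, so Proposition \ref{p:BLL1} yields a lattice isomorphism $Y^*\cong L_1(\mu)$ for some measure $\mu$. Since $L_1(\mu)$ is linearly isomorphic to $\ell_1(\Gamma)$, it inherits the Schur property, which forces $\mu$ to be purely atomic; hence $Y^*$ is lattice isomorphic to $\ell_1(\Gamma')$ for some index set $\Gamma'$. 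Dualizing, $Y^{**}$ is lattice isomorphic to $\ell_\infty(\Gamma')$, and since $Y$ sits inside $Y^{**}$ as a closed sublattice \cite[Proposition 1.4.5 (ii)]{Meyer}, I obtain a lattice embedding of $Y$ as a closed sublattice $\widehat Y\subseteq \ell_\infty(\Gamma')$.

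Next, I use the countable norming hypothesis to trim $\Gamma'$ down to a countable set. Let $\{x_n^*\}_{n\in\omega}$ be a countable $c$-norming set for $X$, set $y_n^*:=x_n^*\circ \Phi^{-1}\in Y^*$, and observe that $\{y_n^*\}$ is a countable $c'$-norming set for $Y$, with $c':=c/(\|\Phi\|\|\Phi^{-1}\|)>0$. Under the lattice identification $Y^*\cong \ell_1(\Gamma')$, each $y_n^*$ has countable support $S_n\subseteq \Gamma'$, so $\Gamma_0:=\bigcup_{n\in\omega} S_n$ is a countable subset of $\Gamma'$. For any $y\in Y$ with image $\widehat y\in \widehat Y$, the natural pairing between $\ell_1(\Gamma')$ and $\ell_\infty(\Gamma')$ gives
\[ |y_n^*(y)| = \bigg|\sum_{\gamma\in S_n} y_n^*(\gamma)\,\widehat y(\gamma)\bigg| \leq \|y_n^*\|_1 \sup_{\gamma\in \Gamma_0}|\widehat y(\gamma)|. \]
Setting $M:=\sup_n \|y_n^*\|_1 <\infty$ and taking the supremum over $n$, the norming property yields $\sup_{\gamma\in\Gamma_0}|\widehat y(\gamma)|\geq (c'/M)\|y\|$. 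Consequently, the coordinate restriction $\pi\colon \ell_\infty(\Gamma')\to \ell_\infty(\Gamma_0)$ -- manifestly a lattice homomorphism -- is bounded below on $\widehat Y$, and so maps $\widehat Y$ lattice isomorphically onto a sublattice of $\ell_\infty(\Gamma_0)\cong \ell_\infty$.

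I expect the key obstacle to lie in the second step, where one must upgrade the purely linear identification $Y^*\cong \ell_1(\Gamma)$ to a lattice identification with some $\ell_1(\Gamma')$: this rests on Proposition \ref{p:BLL1} combined with the observation that the Schur property of $\ell_1(\Gamma)$ rules out any non-atomic part in the measure $\mu$. Once this is in hand, the passage from $\ell_\infty(\Gamma')$ down to $\ell_\infty(\Gamma_0)\cong \ell_\infty$ via the countable norming set is straightforward bookkeeping using the fact that elements of $\ell_1(\Gamma')$ have countable support.
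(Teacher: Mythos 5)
Your argument is correct and follows essentially the same route as the paper: reduce to an AM-space via Corollary \ref{cor:Am-spaces}, identify the dual lattice-isomorphically with some $\ell_1(\Gamma')$, use the countable supports of the norming functionals to cut down to a countable coordinate set, and conclude via the coordinate-restriction lattice embedding into $\ell_\infty$. The only deviation is in the identification of $Y^*$: the paper invokes the classical fact that an AL-space linearly isomorphic to $\ell_1(\Gamma)$ is lattice isometric to $\ell_1(\Gamma)$, whereas you rederive this from Kakutani's representation together with the Schur property ruling out a non-atomic part of $\mu$ --- both are valid, and the rest of your bookkeeping (passing through $Y^{**}\cong\ell_\infty(\Gamma')$ and restricting to $\Gamma_0$) matches the paper's explicit estimate showing $\{e_\gamma^*:\gamma\in S\}$ is norming.
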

\begin{proof}
Let $Y$ be a Banach lattice which is isomorphic to $X$. Since $X$ is a predual of $\ell_1(\Gamma)$, $Y$ is an $\mathcal{L}_\infty$-space. Hence, by Corollary \ref{cor:Am-spaces}, we may assume that $Y$ is an AM-space. Then, $Y^*$ is an AL-space isomorphic to $\ell_1(\Gamma)$, so $Y^*$ is, in fact, lattice isometric to $\ell_1(\Gamma)$ --see \cite[Corollary to Theorem 3 of Section 15 and Theorem 4 of Section 18]{Lacey-book}. Let us denote by $(e_\gamma^*)_{\gamma\in\Gamma}$ the canonical basis of $\ell_1(\Gamma)$ and let $(y_n^*)_{n\in\omega}$ be a countable $c$-norming set in $B_{Y^*}$ for some $c>0$. We can write each $y_n^*$ as
$$
y_n^*=\sum_{\gamma\in\Gamma} \lambda_\gamma^n e_\gamma^*,
$$
where $\sum_{\gamma\in\Gamma} |\lambda_\gamma^n|\leq 1$. Thus, for every $n\in\omega$, the set $S_n=\{\gamma\in\Gamma\::\:\lambda_\gamma^n\neq 0\}$ is countable and, consequently, $S=\bigcup_{n\in\omega} S_n$ is also a countable set. We claim that the set $\{e_\gamma^*\::\:\gamma\in S\}$ is $c$-norming for $Y$. Indeed, let us first note that for every $y\in Y$ we have
$$
|y_n^*(y)|=\biggl|\sum_{\gamma\in S_n}\lambda_\gamma^ne_\gamma^*(y)\biggr|=\sum_{\gamma\in S_n} |\lambda_\gamma^n||e_\gamma^*(y)|\leq \biggl(\sum_{\gamma\in S_n} |\lambda_\gamma^n|\biggr)\sup_{\gamma\in S_n} |e_\gamma^*(y)|\leq \sup_{\gamma\in S_n} |e_\gamma^*(y)|.
$$
From the latter we deduce that
$$
 c\,\|y\|\leq \sup_{n\in\omega} |y_n^*(y)| \leq \sup_{n\in\omega} \sup_{\gamma\in S_n} |e_\gamma^*(y)|\leq \sup_{\gamma\in S}|e_\gamma^*(y)| \quad \text{ for all } y\in Y.
$$
Finally, observe that $Y$ is lattice embeddable into $\ell_\infty(S)$ through the lattice embedding given by $y\mapsto \bigl(e_\gamma^*(y)\bigr)_{\gamma\in S}$.
\end{proof}

The preceding proposition motivates the following definition:

\begin{defin}\label{def:desired-property}
    We say that a Banach space $X$ has the \textbf{Desired Property (DP)} if  for every norming sequence $(e_n^*)_{n\in \omega}$ in $X^*$ there exists an element $f \in X$ such that no element $g \in X$ satisfies 
$$ e_n^*(g)=|e_n^*(f)| \mbox{ for every } n \in \omega.$$
\end{defin}

We will see next that a Banach space has the (DP) if and only if it is not isomorphic to a sublattice of $\ell_\infty$. Therefore, by Proposition 4.1, in order to prove that $\PS$ is not isomorphic to a Banach lattice it will be sufficient to check that this space has the (DP).

\begin{cor}\label{cor:desired property}
Given an isomorphic predual $X$ of $\ell_1(\Gamma)$ which has a countable norming set, the following statements are equivalent:
\begin{enumerate}
    \item $X$ is isomorphic to a Banach lattice.
    \item $X$ is isomorphic to a sublattice of $\ell_\infty$.
    \item $X$ does not have the (DP). That is, there exists a norming sequence $(x_n^*)_{n\in\omega}$ in $B_{X^*}$ such that for every $f\in X$ there is an element $g\in X$ such that
    $$
    x_n^*(g)=|x_n^*(f)|, \text{ for every } n\in \omega.
    $$
\end{enumerate}
\end{cor}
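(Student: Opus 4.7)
The plan is to cycle through $(1)\Rightarrow (2)\Rightarrow (3)\Rightarrow (1)$. The implication $(2)\Rightarrow (1)$ is trivial since any closed sublattice of $\ell_\infty$ is itself a Banach lattice, and $(1)\Rightarrow (2)$ is exactly Proposition \ref{prop:simplification}. So the bulk of the work is to establish the equivalence $(2)\Leftrightarrow (3)$.

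The key observation is a dictionary between countable norming sequences in $B_{X^*}$ and bounded operators $X\to \ell_\infty$. Given $(x_n^*)_{n\in\omega}\subseteq B_{X^*}$, the map $T\colon X\to \ell_\infty$ defined by $T(f)=(x_n^*(f))_{n\in\omega}$ is a contraction which is an isomorphism onto its image precisely when $(x_n^*)$ is norming. Moreover, $T(X)$ is a closed sublattice of $\ell_\infty$ if and only if it is closed under the pointwise absolute value, thanks to the identities $a\vee b=\tfrac12(a+b+|a-b|)$ and $a\wedge b=\tfrac12(a+b-|a-b|)$; and the requirement that $|Tf|\in T(X)$ for every $f\in X$ translates verbatim into the condition appearing in (3), namely that for every $f\in X$ there exists $g\in X$ satisfying $x_n^*(g)=|x_n^*(f)|$ for all $n\in\omega$.

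With this correspondence in hand, $(3)\Rightarrow (2)$ is immediate: pick a norming sequence that witnesses the failure of the (DP) and form the associated operator $T$; by the previous paragraph, $T$ is an isomorphism from $X$ onto a closed sublattice of $\ell_\infty$. Conversely, for $(2)\Rightarrow (3)$, start from an isomorphism $S\colon X\to \ell_\infty$ onto a closed sublattice and set $x_n^*= (e_n\circ S)/\|S\|$, where $(e_n)$ are the coordinate functionals on $\ell_\infty$. These lie in $B_{X^*}$ and form a norming sequence since $(e_n)$ is $1$-norming for $\ell_\infty$ and $S$ is an isomorphism. For any $f\in X$ the element $g:=S^{-1}(|Sf|)\in X$, well-defined because $S(X)$ is a sublattice, satisfies $x_n^*(g)=|x_n^*(f)|$ for every $n$.

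I do not foresee any real obstacle in this argument: the essential difficulty has already been absorbed into Proposition \ref{prop:simplification}, and what remains is only the straightforward unwinding of definitions described above.
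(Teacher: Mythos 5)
Your proposal is correct and follows essentially the same route as the paper: (1)$\Leftrightarrow$(2) via Proposition \ref{prop:simplification}, then for (2)$\Rightarrow$(3) pulling back the coordinate functionals of $\ell_\infty$ through the isomorphism and taking $g=S^{-1}|Sf|$, and for (3)$\Rightarrow$(2) using the evaluation operator $f\mapsto(x_n^*(f))_n$, whose image is a closed subspace stable under pointwise absolute value and hence a sublattice. No gaps.
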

\begin{proof}
(1) $\Leftrightarrow$ (2) is just Proposition 4.1.

(2) $\Rightarrow$ (3). Let $T\colon X\to Y$ be an invertible operator onto a sublattice $Y$ of $\ell_\infty$ and let $C=\|T\|\|T^{-1}\|$. If we denote by $(e_n^*)_{n\in\omega}\subseteq \ell_\infty^*$ the canonical basis of $\ell_1$, the natural order in $\ell_\infty$ is given by
$$
f\leq g \quad \text{ if and only if } \quad e_n^*(f)\leq e_n^*(g), \quad \text{for every } n\in\omega.
$$
It is clear that $(e_n^*)_{n\in\omega}$ is $1$-norming in $\ell_\infty$ and, hence, the sequence of restrictions
$$
y_n^*:=\left.e_n^*\right|_Y, \quad n\in\omega
$$
is $1$-norming in $Y$. Now, define
$$
x_n^*:=\frac{1}{\|T\|}T^*y_n^*,\quad n\in\omega.
$$
It is straightforward to check that $(x_n^*)_{n\in\omega}\subseteq B_{X^*}$ is $1/C$-norming in $X$ and, given $f\in X$, if we take $g=T^{-1}|Tf|$, then for every $n\in\omega$ we have
$$
x_n^*(g)=\frac{1}{\|T\|}T^*y_n^*\bigl(T^{-1}|Tf|\bigr)=\frac{1}{\|T\|}y_n^*\bigl(|Tf|\bigr)=|x_n^*(f)|.
$$

(3) $\Rightarrow$ (2). Suppose that $X$ fails the (DP). That is, there exists a norming sequence $(x_n^*)_{n\in\omega}\subseteq B_{X^*}$ such that for every $f\in X$ there is an element $g\in X$ such that $x_n^*(g)=\left|x_n^*(f)\right|$ for every $n\in\omega$. We define the following mapping
$$
\begin{array}{ccc}
    T: & X\longrightarrow & \ell_\infty \\
      &f \longmapsto & \bigl(x_n^*(f)\bigr)_{n\in\omega},
\end{array}    
$$
which is clearly linear and bounded below (since $(x_n^*)_{n\in\omega}$ is norming). Hence, $Y=T(X)$ is a closed subspace of $\ell_\infty$. Moreover, it is a sublattice. Indeed, by hypothesis, for any $\bigl(x_n^*(f)\bigr)_{n\in\omega}\in Y$ there exists $g\in X$ such that $\bigl(x_n^*(g)\bigr)_{n\in\omega}=\bigl(|x_n^*(f)|\bigr)_{n\in\omega}=\bigl|\bigl(x_n^*(f)\bigr)_{n\in\omega}\bigr|$; that is, the absolute value of $\bigl(x_n^*(f)\bigr)_{n\in\omega}$ also belongs to $Y$.
\end{proof}

\begin{thm} \label{thm:main}
$\PS$ is not isomorphic to a Banach lattice.
\end{thm}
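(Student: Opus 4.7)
By Corollary \ref{cor:desired property}, the plan is to show that $\PS$ has the Desired Property. Fixing any $c$-norming sequence $(x_n^*)_{n\in\omega}\subseteq B_{\PS^*}$, I must exhibit $f\in \PS$ such that no $g\in \PS$ satisfies $x_n^*(g)=|x_n^*(f)|$ for every $n$. Under the identification $\PS^*\equiv \JL(\cA)^\perp$ recalled in Section \ref{s:normingfree}, each $x_n^*$ corresponds to a measure $\nu_n=(\mu_n,\bar\nu_n)$. Testing the norming condition against the norm-one elements $f_m=1_{(m,0)}-1_{(m,1)}\in \PS$ and choosing, for every $m$, some $k(m)$ with $|\nu_{k(m)}(f_m)|\geq c$, the reindexed sequence $\tilde\nu_m := \nu_{k(m)}$ satisfies $|\tilde\mu_m(m,0)|\geq c/2$; it is therefore admissible and fulfils conditions (i)--(iii) of Remark \ref{rem:sequences-to-code}. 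By the coding of Section \ref{subsec:heart} it coincides with $(\nu_m^\xi)_{m\in\omega}$ for a unique $\xi<\fc$, which fixes the candidate
\[ f := 1_{B_\xi^0} - 1_{B_\xi^1}\in \PS,\qquad \|f\|_\infty=1. \]

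Next, I would invoke Remark \ref{rem:zero} (which gives $\nu_n^\xi(f)=\mu_n^\xi(f)$ since $\supp f\subseteq B_\xi^0\cup B_\xi^1$) together with (P1), (P2) and the admissibility relation $\mu_n^\xi(c_n)=0$ to compute, with $a\geq c_\xi := \inf_n|\mu_n^\xi(n,0)|$ and $\delta<c_\xi/11$,
\[ |\nu_n^\xi(f)|\in (2a-4\delta,\,2a+4\delta)\ \text{for } n\in J_1^\xi,\qquad |\nu_n^\xi(f)|<2\delta\ \text{for } n\in J_0^\xi\setminus J_1^\xi. \]
Supposing for contradiction that some $g\in \PS$ satisfies $\nu_n^\xi(g)=|\nu_n^\xi(f)|$ for every $n$, I approximate $g$ in $\JL(\cB)=\overline{s(\fB)}$ by a simple function $g^*\in s(\fB)$ with $\|g-g^*\|_\infty$ arbitrarily small; the two regimes $\nu_n^\xi(g^*)\approx 2a$ on $J_1^\xi$ and $\nu_n^\xi(g^*)\approx 0$ on $J_0^\xi\setminus J_1^\xi$ persist up to a controllable error.

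The hard part is that a direct application of Lemma \ref{lem:sep} to $g^*$ only yields $\fB$-separation of $\{\nu_n^\xi:n\in J_1^\xi\}$ from $\{\nu_n^\xi:n\in J_0^\xi\setminus J_1^\xi\}$, which is too weak: Remark \ref{rem:final} only forbids $\fB(\fc\setminus\{\xi\})$-separation. My key step would be to exploit the almost-disjointness of $\cB$ --which makes $B_\xi^i\cap B_\eta^j$ finite, and hence an element of $\fin(\omega\times 2)\subseteq \fB(\fc\setminus\{\xi\})$, whenever $\eta\neq\xi$-- in order to decompose
\[ g^* = \alpha\cdot 1_{B_\xi^0} + \beta\cdot 1_{B_\xi^1} + \hat g, \]
with $\hat g\in s(\fB(\fc\setminus\{\xi\}))$ and $|\alpha|,|\beta|$ linearly controlled by $\|g^*\|_\infty$. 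Since $\bar\nu_n^\xi$ vanishes on $B_\xi^0$ and $B_\xi^1$ by Remark \ref{rem:zero}, the identity $\nu_n^\xi(\hat g)=\nu_n^\xi(g^*)-\alpha\mu_n^\xi(B_\xi^0)-\beta\mu_n^\xi(B_\xi^1)$ lets me re-express $\nu_n^\xi(\hat g)$ on each of $J_2^\xi$, $J_1^\xi\setminus J_2^\xi$ and $J_0^\xi\setminus J_1^\xi$ in terms of $\Delta := (\alpha-\beta) a$ using the (P1)--(P2) estimates.

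A dichotomy on $|\Delta|$ then closes the argument. If $|\Delta|\geq a$, the values of $\nu_n^\xi(\hat g)$ on $J_2^\xi$ and on $J_1^\xi\setminus J_2^\xi$ differ uniformly by approximately $2|\Delta|\geq 2a$, so Lemma \ref{lem:sep} applied to $\hat g\in s(\fB(\fc\setminus\{\xi\}))$ yields $\fB(\fc\setminus\{\xi\})$-separation of the first pair in Remark \ref{rem:final}. Otherwise $|\Delta|<a$, in which case $\nu_n^\xi(\hat g)$ stays above $2a-|\Delta|>a$ on $J_1^\xi$ while remaining close to $0$ on $J_0^\xi\setminus J_1^\xi$, so Lemma \ref{lem:sep} forces $\fB(\fc\setminus\{\xi\})$-separation of the second pair. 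Either alternative contradicts Remark \ref{rem:final}, so no such $g$ exists and $\PS$ has the Desired Property.
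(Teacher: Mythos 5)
Your proposal is correct and follows essentially the same route as the paper's proof: the same reduction to the Desired Property via Corollary \ref{cor:desired property}, the same diagonal extraction of an admissible subsequence coded as $(\nu_n^\xi)_{n\in\omega}$, the same witness $f=1_{B_\xi^0}-1_{B_\xi^1}$, and the same contradiction with Remark \ref{rem:final} obtained through Lemma \ref{lem:sep}. Your decomposition $g^*=\alpha\, 1_{B_\xi^0}+\beta\, 1_{B_\xi^1}+\hat g$ with $\hat g$ simple $\fB(\fc\setminus\{\xi\})$-measurable, together with the dichotomy on $|\Delta|=|(\alpha-\beta)a|$ versus $a$, is just a reparametrization of the paper's ``without loss of generality $h=rf+s$'' with $r=(\alpha-\beta)/2$ and the case split $r\leq 1/2$ versus $r\geq 1/2$.
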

\begin{proof}
We will prove this fact by showing that $\PS$ has the (DP). Fix a norming sequence $(e_n^*)_{n \in \omega}$ in $B_{\PS^*}$. Our aim is to find an $f\in \PS$ such that no $g\in \PS$ satisfies 
\begin{equation}\label{eq:non-existence-absolute-value}
\langle e_n^*, g\rangle=|\langle e_n^*, f\rangle|, \quad\text{ for every } n\in\omega.  
\end{equation}
The very definition of $\PS$ allows us to write, for every $n\in\omega$, $e_n^*=\mu_n+\bar\nu_n$, where $\mu_n\in \ell_1(\omega\times 2)$ and $\bar\nu_n\in \ell_1(\fc\times 2)$ such that $\|\mu_n\|_1+\|\bar\nu_n\|_1\leq 1$ and $\mu_n(k,0)=-\mu_n(k,1)$ for every $k\in \omega$ and $\bar\nu_n(\alpha,0)=-\bar\nu_n(\alpha,1)$ for every $\alpha<\fc$ --since we are identifying $\PS^*$ with $\JL(\mathcal{A})^\perp$, see Section \ref{subsec:norming} for more details. Given that every element of $(\bar\nu_n)_{n\in\omega}$ vanishes on finite subsets of $\omega\times 2$, we have
$$
e_n^*(f_k)=e_n^*\left(1_{(k,0)}-1_{(k,1)}\right)=\mu_n(k,0)-\mu_n(k,1)=2\,\mu_n(k,0),\,\, \text{ for all } k,n\in\omega,
$$
where $f_k$ had already been defined in equation (\ref{eq:fn's}).

In addition, as $(e_n^*)_{n\in\omega}$ is a norming set, there exists $\tilde{c}>0$ such that
$$
2\,\sup_n|\mu_n(k,0)|=\sup_n|e_n^*(f_k)|\geq \tilde{c},\quad \text{ for every } k\in\omega.
$$
Since $\sup_n|\mu_n(k,0)|>\frac{\tilde{c}}{4}$ for every $k\in\omega$, there exists a function $\pi:\omega\to\omega$ such that $\left|\mu_{\pi(k)}(k,0)\right|>\frac{\tilde{c}}{4}$ for every $k\in\omega$. Moreover, as $\|\mu_n\|_1\leq 1$ for all $n\in\omega$, it follows that the set $\pi^{-1}(n)$ must be finite for every $n\in\omega$. Therefore, we can find an infinite subset $\omega_0\subseteq\omega$ such that $\left.\pi\right|_{\omega_0}$ is injective. Consequently, $(e_{\pi(n)}^*)_{n\in\omega_0}=(\mu_{\pi(n)},\bar\nu_{\pi(n)})_{n\in\omega_0}$ is a sequence of the form described in Remark \ref{rem:sequences-to-code}.
Thus, there exists $\xi<\mathfrak{c}$ such that $(e_{\pi(n)}^*)_{n\in\omega_0}= (\nu_n^\xi)_{n\in\omega}$, with the notation of Section \ref{subsec:heart}. Recall that, by the way the enumeration has been carried out, we have $\bar\nu_n^\xi(\alpha,i)=0$ whenever $\alpha\geq \xi$, $i\in \{0,1\}$ and $n\in\omega$. Moreover,  by virtue of Remark \ref{rem:final}, the pairs of measures 
\begin{itemize}
    \item $\{\nu_n^\xi: n\in J_2^\xi\}$ and $\{\nu_n^\xi: n\in J_1^\xi\setminus J_2^\xi\}$, 
    \item $\{\nu_n^\xi: n\in J_1^\xi\}$ and $\{\nu_n^\xi: n\in J_0^\xi\setminus J_1^\xi\}$
\end{itemize}
are not $\mathfrak{B}(\fc\setminus\{\xi\})$-separated.

\par For the rest of the proof, we will drop the superindex $\xi$, and simply write $\nu_n,\,\mu_n$ and $\bar\nu_n$ for the measures $\nu_n^\xi,\,\mu_n^\xi$ and $\bar\nu^\xi_n$, respectively --we will also denote $p_n$ instead of $p_n^\xi$. Now, consider the function
\begin{equation}\label{eq:main-thm-1}
   f=1_{B^0_\xi}-1_{B^1_\xi}\in \PS. 
\end{equation}
Since $f$ is supported in $B^0_\xi \cup B^1_\xi$, Remark \ref{rem:zero} asserts that $\bil{\nu_n}{f} =  \bil{\mu_n}{f}$ for every $n\in \omega$. Let us suppose that there exists an element $g\in \PS$ such that
\[
\langle \nu_n, g\rangle=|\langle \nu_n, f\rangle|=|\langle\mu_n,f\rangle| \text{ for every } n\in\omega.\]
We will arrive at a contradiction with the separation of the above pairs of sets. Note that the latter means that the function $f$ defined in (\ref{eq:main-thm-1}) cannot have an absolute value in $\PS$ with respect to the sequence $(\nu_n)_{n\in\omega}=(e_{\pi(n)}^*)_{n\in\omega_0}$; in particular, $f$ cannot have an absolute value with respect to $(e_n^*)_{n\in\omega}$ --which is what we were looking for in (\ref{eq:non-existence-absolute-value}). Our argument closely follows that of \cite[Lemma 5.3]{PSA}. First, we introduce some notation: given $a,b\in \R$ and $\delta>0$, we write $a\approx_\delta b$ to mean $|a-b|<\delta$. 

Let us pick $\delta>0$ satisfying property (P1). Since the subspace of $\PS$ consisting of all simple $\fB$-measurable functions in $\PS$ is dense, there is such a function $h\in \PS$ such that $\|g-h\|<\delta$. Therefore,
$$
|\langle\mu_n,f\rangle|=|\langle \nu_n,f\rangle|\approx_\delta \langle \nu_n,h\rangle, \text{ for every } n\in\omega.
$$

Without loss of generality, we assume that $h=r f+s$, where $r\in\mathbb{R}$ and $s$ is a simple $\mathfrak{B}(\mathfrak{c}\setminus\{\xi\})$-measurable function lying in $\PS$. Let us further suppose that $r\geq 0$. Otherwise, we may apply our argument to the function $ -g$ instead of $g$; that is, if we show that $-|f|$ cannot exist, then neither can $|f|$. Hence, we have:
\begin{equation}\label{eq:main-thm-2}
    |\langle \mu_n, f\rangle|\approx_\delta r\langle \mu_n, f\rangle+\langle \nu_n,s\rangle, \text{ for every } n\in\omega.
\end{equation}

Now, observe that properties (P1) and (P2), together with the definition of $B^0_\xi$ and $B^1_\xi$, yield, for every $n\in J_0^\xi$, 
$$
\begin{array}{cl}
  \bil{\mu_n}{f}\approx_\delta \int_{c_n} f d\mu_n=f(p_n)\mu_n(p_n)+f(c_n\setminus p_n)\mu_n(c_n\setminus p_n) 
      \approx_{2\delta} \left\{\begin{array}{rl}
       2a   &  \text{ if } n\in J_2^\xi,\\[1mm]
       -2a   &  \text{ if } n\in J_1^\xi\setminus J_2^\xi, \\[1mm]
       0 &  \text{ if } n\in J_0^\xi\setminus J_1^\xi.
     \end{array}\right.
\end{array}
$$
Hence,
\begin{equation}\label{eq:main-thm-3}
   \langle \mu_n, f\rangle    \approx_{3\delta} \left\{\begin{array}{rl}
       2a   &  \text{ if } n\in J_2^\xi,\\[1mm]
       -2a   &  \text{ if } n\in J_1^\xi\setminus J_2^\xi, \\[1mm]
       0 &  \text{ if } n\in J_0^\xi\setminus J_1^\xi.
     \end{array}\right.   
\end{equation}
We deduce from the previous equation that
\begin{equation}\label{eq:main-thm-4}
    |\langle \mu_n,f\rangle | \approx_{3\delta} \left\{\begin{array}{rl}
       2a   &  \text{ if } n\in J_1^\xi,\\[1mm]
       0 & \text{ if } n\in J_0^\xi \setminus J_1^\xi.
     \end{array}\right.   
\end{equation}
Finally, using (\ref{eq:main-thm-3}) and (\ref{eq:main-thm-4}), we infer from (\ref{eq:main-thm-2}) the following relations:
\begin{equation}\label{eq:main-thm-5}
\left\{\begin{array}{rrl}
       2a\approx_{\delta(4+3r)}& 2ra +\langle \nu_n,s\rangle  &  \text{ if } n\in J_2^\xi,\\[1mm]
       2a\approx_{\delta(4+3r)}& -2ra+\langle \nu_n,s\rangle  &  \text{ if } n\in J_1^\xi\setminus J_2^\xi,\\[1mm]
       0\approx_{\delta(4+3r)}& \langle \nu_n,s\rangle & \text{ if } n\in J_0^\xi \setminus J_1^\xi.
     \end{array}\right.   
\end{equation}

First, suppose that $0\leq r\leq 1/2$. The first two relations of (\ref{eq:main-thm-5}) give, for every $n\in J_1^\xi$,
$$
\langle \nu_n,s\rangle\geq 2(1-r)a-\delta (4+3r)\geq a-\frac{11}{2}\delta,
$$
while the third one gives, for every $k\in J_0^\xi \setminus J_1^\xi$,
$$
\langle \nu_k,s\rangle\leq \delta (4+3r)\leq \frac{11}{2}\delta.
$$
Thus, for any $n\in J_1^\xi$ and any $k\in J_0^\xi \setminus J_1^\xi$, we have, using $\delta<c/11$ and $a\geq c$,
$$
\langle \nu_n,s\rangle-\langle \nu_k,s\rangle\geq a-11\delta>0.
$$
This already implies --see Lemma \ref{lem:sep}-- that the sets $\{\nu_n: n\in J_1^\xi\}$ and $\{\nu_n: n\in J_0^\xi\setminus J_1^\xi\}$ are $\fB(\fc\setminus\{\xi\})$-separated.
On the other hand, if $r\geq 1/2$, then using relations (\ref{eq:main-thm-5}) again, we infer that for every $n\in J_1^\xi \setminus J_2^\xi$ and every $k\in J_2^\xi$

\begin{align*}
\langle \nu_n,s\rangle -\langle \nu_k,s\rangle &  \geq 2a(1+r)-\delta(4+3r) -\bigl(2a(1-r)+ \delta(4+3r)\bigr) 
\\
     & = 2r(2a-3\delta)-8\delta \geq 2a-11\delta>0.
\end{align*}
Hence, the sets $\{\nu_n: n\in J_2^\xi\}$ and $\{\nu_n: n\in J_1^\xi\setminus J_2^\xi\}$ are $\mathfrak{B}(\mathfrak{c}\setminus\{\xi\})$-separated, again by Lemma \ref{lem:sep}. Thus, in both cases we arrive at a contradiction.
\end{proof}

\subsection{Relation to other classes of $\cL_\infty$-spaces}
Apart from the class of AM-spaces, other well-known classes of $\cL_\infty$-spaces are those of G-spaces and $C_\sigma(K)$-spaces. \emph{G-spaces} can be characterized as the closed subspaces $X$ of some $C(K)$ for which there exists a certain set of triples 
$A=\{(t_\alpha, t'_\alpha, \lambda_\alpha): \alpha \in \Gamma\} \subseteq K \times K \times \mathbb R $ so that $X = \{f\in C(K): f(t_\alpha) = \lambda_\alpha f(t'_\alpha) \ \text{ for all } \alpha\in \Gamma\}$. On the other hand, \emph{$C_\sigma(K)$-spaces} are the closed subspaces $X$ of some $C(K)$ which are of the form $$X=\{f \in C(K): f(\sigma t) = -f(t) \ \text{ for all } t\in K\},$$ where $\sigma: K \to K$ is a homeomorphism with $\sigma^2 = \operatorname{Id}$. 

\par These classes are also rather natural in the context of $1$-complemented subspaces. Indeed, $G$-spaces are precisely those Banach spaces which are $1$-complemented in some AM-space, while a Banach space is $1$-complemented in a $C(K)$-space if and only if it is a $C_\sigma(K)$-space \cite[Theorem 3]{LW}.

\par It is clear that $C_\sigma(K)$-spaces are $G$-spaces. On the other hand, Kakutani's representation theorem asserts that every AM-space $X$ is of the form 
$X = \{f\in C(K): f(t_\alpha) = \lambda_\alpha f(t'_\alpha) \ \text{ for all }  \alpha\in \Gamma\}$
for some compact space $K$ and a certain set of triples $A=\{(t_\alpha, t'_\alpha, \lambda_\alpha)\::\:\alpha\in \Gamma\}\subseteq K \times K \times \mathbb [0,\infty)$.
Therefore, AM-spaces are in particular $G$-spaces. The properties of $\PS$ and Theorem \ref{thm:main} show that the latter are a strictly larger class: 

\begin{cor} There is a $C_\sigma(K)$-space which is not isomorphic to an AM-space. In particular, there is a $G$-space which is not isomorphic to an AM-space.
\end{cor}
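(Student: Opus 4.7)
The plan is to exhibit $\PS$ itself as a $C_\sigma(K)$-space and deduce the corollary directly from Theorem \ref{thm:main}, which shows that $\PS$ is not isomorphic to any Banach lattice---and therefore not to any AM-space, since AM-spaces are Banach lattices.

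First I would introduce the involution $\sigma\colon K_\cB\to K_\cB$ determined on the dense set of isolated points by $\sigma(n,0)=(n,1)$ and $\sigma(n,1)=(n,0)$. To see that $\sigma$ extends to a homeomorphism of the Stone space $K_\cB$ with $\sigma^2=\Id$, it suffices to check that $\sigma$ preserves the Boolean algebra $\fB$. Clearly $\sigma$ maps each finite subset of $\omega\times 2$ and each cylinder to itself, and the very construction in Section \ref{subsec:heart} yields $\sigma(B_\xi^0)=B_\xi^1$ (and hence $\sigma(B_\xi^1)=B_\xi^0$) for every $\xi<\fc$, because for each $n\in J_1^\xi$ exactly one element of $c_n$ lies in $B_\xi^0$ and the other in $B_\xi^1$. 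The resulting Boolean automorphism lifts to a homeomorphism of $K_\cB$ swapping $p_{B_\xi^0}$ with $p_{B_\xi^1}$ and fixing the point $\infty$.

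Next I would verify the identification
\[\PS\;=\;\{f\in C(K_\cB)\colon f\circ\sigma=-f\}.\]
Indeed, the condition $f(n,0)=-f(n,1)$ defining $\PS$ as a subspace of $\JL(\cB)=C(K_\cB)$ is nothing but $f\circ\sigma=-f$ on the dense subset $\omega\times 2\subseteq K_\cB$; since both $f\circ\sigma$ and $-f$ are continuous, the relation propagates to all of $K_\cB$, and the reverse inclusion is immediate. Hence $\PS$ is literally a $C_\sigma(K)$-space in the sense of the definition recalled above.

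Finally, Theorem \ref{thm:main} asserts that $\PS$ is not isomorphic to a Banach lattice, hence not to an AM-space, giving the first assertion. The second follows at once because every $C_\sigma(K)$-space is a $G$-space. The only point requiring verification is the $C_\sigma$-representation of $\PS$; once this is in place the main theorem supplies the counterexample with no further work, and this is where I expect essentially all the content of the proof to lie.
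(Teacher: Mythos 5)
Your proposal is correct and follows essentially the same route as the paper, which likewise exhibits $\PS$ explicitly as a $C_\sigma(K_{\cB})$-space via the involution $\sigma(n,i)=(n,1-i)$, $\sigma(p_{B_\xi^i})=p_{B_\xi^{1-i}}$, $\sigma(\infty)=\infty$, and then invokes Theorem \ref{thm:main} together with the fact that AM-spaces are $G$-spaces. The only cosmetic slip is the claim that $\sigma$ maps each finite subset of $\omega\times 2$ \emph{to itself}: it maps it to another finite subset, which is all that is needed to conclude that $\sigma$ preserves the algebra $\fB$.
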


\par In fact, equation (\ref{eq:Csigma}) witnesses 
$\PS$ is a $C_\sigma(K)$-space for $K=K_\cB$: we can write
\[ \PS = \{f\in C(K_\cB): f(\sigma p) = -f(p) \, \text{ for all }\, p\in K_\cB \}, \]
where $\sigma: K_\cB \to K_\cB$ is defined as
\[ \sigma(n,i)=(n,1-i), \quad\quad \sigma(p_{B_\xi^i}) = p_{B_\xi^{i-1}}, \quad\quad  \sigma(\infty)=\infty, \] 
for $n\in \omega$, $i\in\{0,1\}$ and $\xi<\fc$. This yields two interesting consequences. First, let us observe that such map $\sigma$ has $\infty$ as its only fixed point. It is shown in \cite[Corollary of Theorem 10, Section 10]{Lacey-book} that every $C_\sigma(K)$-space, where $K$ is a scattered compact space and $\sigma$ has no fixed points, is isometrically isomorphic to a $C(K)$-space. This result is no longer true if $\sigma$ has only one fixed point, as the existence of $\PS$ shows. 

On the other hand, in \cite[Theorem 7]{HHM86} it is shown that a Banach space $X$ has an ultrapower isometric to an ultrapower of $c_0$ if and only if $X$ is isometric to a $C_\sigma(K)$-space for  $K$ a totally disconnected compact space with a dense subset of isolated points and such that $\sigma$ has a unique fixed point which is not isolated in $K$. We therefore conclude that $(\PS)^\mathcal{U}$ is isometric to $(c_0)^\mathcal{U}$ for some ultrafilter $\mathcal{U}$. This should be compared with \cite[Theorem 4.1]{HHM83}, in which the authors construct a Banach space $X$ such that $X^\mathcal{U}$ is isometric to $(c_0)^\mathcal{U}$ for some ultrafilter $\mathcal{U}$, but $X$ is not \textit{isometric} to any Banach lattice.

\section{The CSP for complex Banach lattices}
\label{SectionCSPcomplex}

In this section, we will show how $\PS$ can be \textit{modified} to provide a negative solution to the Complemented Subspace Problem for \textit{complex Banach lattices}. By a complex Banach lattice, as usual, we mean the complexification $X_{\mathbb C}=X\oplus iX$ of a real Banach lattice $X$, equipped with the norm $\|x+iy\|_{X_\mathbb{C}}=\||x+iy|\|_X$, where $|\cdot|:X_\mathbb{C}\to X_+$ is \textit{the modulus} map given by
\begin{equation}\label{eq:norm1}
|x+iy|=\sup_{\theta\in [0,2\pi]}\{x\cos\theta +y\sin \theta\}, \qquad \text{for every } x+iy\in X_\mathbb{C}. 
\end{equation}
Given a real Banach space $E$, $E_\mathbb{C}$ denotes the complexification of the real vector space $E$, $E\oplus i E$, endowed with the norm 
\begin{equation}\label{eq:norm2}
   \|x+iy\|=\sup_{\theta\in [0,2\pi]}\|x\cos \theta+y\sin\theta\|, \quad \text{ for every } x+iy\in E_{\mathbb{C}}. 
\end{equation}
It is not difficult to check that the norm induced by \eqref{eq:norm1} and the one in \eqref{eq:norm2} are equivalent in the class of complex Banach lattices. Moreover, both definitions actually coincide in the complexification of a $C(K)$-space or an $L_p$-space \cite[Section 3.2, Exercises 3 and 5]{AA-book}.

A \textit{complex sublattice} $Z$ of a complex Banach lattice $X_\mathbb{C}$ is the complexification $Y_\mathbb{C}$ of a real sublattice $Y$ of $X$. A $\mathbb{C}$-linear operator between two complex Banach lattices $T:X_\mathbb{C}\to Y_\mathbb{C}$ is said to be a \textit{complex lattice homomorphism} if there exists a lattice homomorphism $S:X\to Y$ such that $T(x_1+ix_2)=Sx_1+iSx_2$ for all $x_1,x_2\in X$. We refer to \cite[Chapter II, Section 11]{Schaefer-book} for further information on complex Banach lattices.

As we mentioned in the introduction, one of the main motivations that led us to consider the complex version of the CSP is the following result of Kalton and Wood \cite{KW}: every $1$-complemented subspace of a complex space with a $1$-unconditional basis has a $1$-unconditional basis. This result is not true in the real case \cite{BFL}, even though it is still unknown whether every complemented subspace of a space with an unconditional basis also has an unconditional basis. Additionally, there are other results for complex Banach lattices which fail in the real setting. Let us recall the following facts:
\begin{itemize}
    \item An M-projection $P$ on $X_\mathbb{C}$ --respectively, an L-projection--, i.e.~a projection satisfying $\|x\|=\max\{\|Px\|,\|x-Px\|\}$ for every $x\in X_\mathbb{C}$ --resp., $\|x\|=\|Px\|+\|x-Px\|$--, is always a band projection \cite{DKL}.
    \item If a complex Banach lattice can be written as $X_\mathbb{C}=E\oplus F$ such that $\|x+y\|=\||x|\lor|y|\|$ for all $x\in E$ and $y\in F$, then $|x|\land |y|=0$ for $x\in E$, $y\in F$; in other words, $E\oplus F$ is a band decomposition of $X_\mathbb{C}$ \cite{Kalton(Dales)}.
\end{itemize}

For the purpose of providing a counterexample to the CSP for complex Banach lattices, a natural approach would be to take $(\PS)_\mathbb{C}=\PS\oplus i\PS$, the complexification of $\PS$, which is one complemented in $\JL(\cB)_{\mathbb{C}}\equiv C_{\mathbb{C}}(K_\mathcal{B})$ --which is a complex Banach lattice. Nevertheless, we do not know whether $(\PS)_\mathbb{C}$ is isomorphic to a complex Banach lattice. Instead, we will show how the construction of $\PS$ can be slightly modified in order to give a negative solution to the CSP in the complex setting as well. This variation of $\PS$, which we denote by $\widetilde{\PS}$, will have the same form as the space $X$ described in Subsection \ref{subsec:general}, so it will also be $1$-complemented in a $C(K)$-space; but $\widetilde{\PS}$ will have the additional feature that its complexification cannot be isomorphic to a complex Banach lattice. In particular, it will not be isomorphic to a real Banach lattice either --see Corollary \ref{cor:final} below.

We start with a complex version of the notion of admissible sequence --cf. Definition \ref{def:admissible-real}.

\begin{defin}\label{def:admissible-complex} We say that a sequence $(\mu_n)_{n\in\omega}$ in the unit ball of $\ell_1(\omega \times 2)\oplus i \ell_1(\omega \times 2)$ is $\mathbb C$-\textit{admissible} if $\mu_n(k,0)=-\mu_n(k,1)$ for every $n,k\in\omega$ and $\inf_{n\in\omega} |\mu_n(n,0)|>0$.
\end{defin}

Note that if $\mu_n(c_k)=0$ for every $n,k\in\omega$, then it follows that $\re \mu_n(c_k)=\im \mu_n(c_k)=0$ for $n,k\in\omega$. Nevertheless, this does not imply that $\inf_{n\in\omega} |\re \mu_n(n,0)|>0$ or $\inf_{n\in\omega} |\im \mu_n(n,0)|>0$, so $(\re \mu_n)_{n\in\omega}$ or $(\im \mu_n)_{n\in\omega}$ are not necessarily \textit{real} admissible sequences. This is an obvious obstruction to directly show that the complexification of $\PS$ cannot be isomorphic to a complex Banach lattice. Instead, working directly with the complex version of the notion of \textit{admissibility} (Definition \ref{def:admissible-complex}) we will show that with small modifications in the construction of $\PS$ one can produce a space $\widetilde{\PS}$ with the following \textit{desired property}.

\begin{defin}\label{def:complex-desired-property}
    We say that a Banach space $X$ has the \textbf{Complex Desired Property ($\mathbb{C}$-DP)} if  for every norming sequence $(e_n^*)_{n\in \omega}$ in $X^*$ there exists an element $f \in X$ such that no element $g \in X$ satisfies 
$$ e_n^*(g)=|e_n^*(f)| \mbox{ for every } n \in \omega.$$
\end{defin}

As we have already mentioned, the new space $\widetilde{\PS}$ does have the same shape as the space $X$ explained in Subsection \ref{subsec:general}. Following the same notation, $\widetilde{\PS}$ is therefore the range of the contractive projection $Q=\text{Id}_{\JL(\cB)}-P$, whereas $\JL(\cA)$ is $1$-complemented in $\JL(\cB)$ by $P$. Note that the operator $Q_\mathbb{C}:\JL(\cB)_\mathbb{C}\to \JL(\cB)_\mathbb{C}$ defined by $Q_\mathbb{C}(f_1+if_2):=Qf_1+iQf_2$ is a norm-one $\mathbb{C}$-linear projection with range $(\widetilde{\PS})_\mathbb{C}$ --see \cite[Lemma 1.7]{AA-book}; similarly, $\JL(\cA)_\mathbb{C}$ is the range of the contractive projection $P_\mathbb{C}$. It should also be noted that now have $(\widetilde{\PS})^*_{\mathbb{C}}\equiv\JL(\cA)_{\mathbb{C}}^\perp$.

Following similar steps as in \cite{PSA}, it is possible to construct two almost disjoint families $\mathcal{A}=\{A_\xi\::\:\xi<\mathfrak{c}\}$ and $\mathcal{B}=\{B_\xi^0,\,B_\xi^1\::\:\xi<\mathfrak{c}\}$ in $\mathcal{P}(\omega\times 2)$ and a suitable enumeration of sequences $(\nu_n^\xi)_{n\in\omega}=(\mu_n^\xi,\bar\nu_n^\xi)_{n\in\omega}$, for $\xi<\fc$, in the unit ball of $\bigl(\ell_1(\omega\times 2)\oplus_1\ell_1(\fc\times 2)\bigr)_\mathbb{C}$ satisfying the properties below:
\begin{itemize}
    \item[\emph{i)}] $(\mu_n^\xi)_{n\in\omega}$ is $\mathbb{C}$-admissible,
    \item[\emph{ii)}]  $\bar\nu_n^\xi(\alpha,0)+\bar\nu_n^\xi(\alpha,1)=0$ for every $\alpha<\fc$,
    \item[\emph{iii)}] $\bar\nu_n^\xi(\alpha,j)=0$ whenever $\alpha\geq \xi$, $n\in\omega$ and $j\in\{0,1\}$;
\end{itemize}
in such a way that if $c:=\inf_{n\in\omega}|\mu_n^\xi(n,0)|$ and $\delta$ represents a fixed number in the interval $(0,c/22)$, there are three infinite sets $J_2^\xi \subseteq J_1^\xi \subseteq J_0^\xi \subseteq \omega$ such that $\omega\setminus J_0^\xi$, $J_0^\xi\setminus J_1^\xi$ and $J_1^\xi\setminus J_2^\xi$ are also infinite, with the following properties:

    \begin{enumerate}
    \item[(Q1)] For every  $n\in J_0^\xi$, $|\mu_n^\xi|\bigl( (J_0^\xi \times 2\bigr)\setminus c_n)<\delta$.
	\item[(Q2)]  There exist $a\in\mathbb{C}$ with $|a|\geq c$ and $p_n^\xi \in \{\{(n,0)\},\{(n,1)\}\} $, such that $|\mu_n^\xi(p_n^\xi)-a|<\delta$ for every $n\in J_1^\xi$ and $\re a\geq \frac{c}{\sqrt{2}}$ or $\im a\geq \frac{c}{\sqrt{2}}$.
    \item[(Q3)] For every $\alpha<\xi$, the pairs
	\begin{itemize}
		\item $\{\nu_n^\alpha: n\in J^\alpha_2\}$ and $\{\nu_n^\alpha: n\in J^\alpha_1\setminus J^\xi_2\}$,
		\item $\{\nu_n^\alpha: n\in J^\alpha_1\}$ and $\{\nu_n^\alpha: n\in J^\alpha_0 \setminus J^\alpha_1\}$
	\end{itemize}
	are not $\fB(\xi \setminus \{\alpha\})$-separated.
\end{enumerate}

Properties (Q1)--(Q3) can be obtained by adjusting lemmata 5.3 and 5.5 from \cite{PSA} to the definition of $\mathbb{C}$-admissibility. 
To avoid cumbersome repetitions, we will not give an explicit proof of these as similar computations will be detailed in the proof of Theorem \ref{thm:complex-CSP}. Let us however sketch the idea of how (Q2) could be verified: Since, for every $\xi<\mathfrak{c}$, $(\nu_n^\xi)_{n\in\omega}=(\mu_n^\xi,\bar\nu_n^\xi)_{n\in\omega}\subseteq B_{(\widetilde{\PS})_{\mathbb{C}}^*}$ and $(\mu_n^\xi)_{n\in\omega}$ is $\mathbb{C}$-admissible, we have
$$
1\geq \bigl|\nu_n^\xi\bigl(1_{(n,0)}-1_{(n,1)}\bigr)\bigr|=2\,|\mu_n^\xi(n,0)|\geq 2c>0, \quad \text{ for every } n\in\omega.
$$
Hence, passing to a subsequence we may assume that $\bigl(\mu_n^\xi(n,0)\bigr)_{n\in\omega}$ converges to some $b\in\mathbb{C}$. As $\inf_{n\in\omega}|\mu_n^\xi(n,0)|=c>0$, then $|b|\geq c$. Thus, $|\re b|\geq \frac{c}{\sqrt{2}}$ or $|\im b|\geq \frac{c}{\sqrt{2}}$. Let us suppose for instance that $|\re b|\geq \frac{c}{\sqrt{2}}$. Since $\mu_n^\xi(c_k)=0$ for every $n,k\in\omega$, in particular, 
$$
\re \mu_n^\xi(n,0)=-\re\mu_n^\xi(n,1)\quad \text{ for every } n\in\omega.
$$
Consequently, for each $n\in\omega$, we can choose $p_n^\xi=\{(n,0)\}$ or $p_n^\xi=\{(n,1)\}$ such that $\re\mu_n^\xi(p_n^\xi)=|\re \mu_n^\xi(n,0)|\geq 0$, so $\re\mu_n^\xi(p_n^\xi)\to |\re b|$. Finally, passing again to a subsequence if necessary, we obtain $\mu_n^\xi(p_n^\xi)\to a$ with $|a|\geq c$ and $\re a=|\re b|\geq \frac{c}{\sqrt{2}}$.

\par Additionally, let us remark that property (Q3) also implies an analogue of Remark \ref{rem:final}, and therefore, for any $\xi<\fc$, the pairs
	\begin{itemize}
		\item $\{\nu_n^\xi: n\in J^\xi_2\}$ and $\{\nu_n^\xi: n\in J^\xi_1\setminus J^\xi_2\}$,
		\item $\{\nu_n^\xi: n\in J^\xi_1\}$ and $\{\nu_n^\xi: n\in J^\xi_0 \setminus J^\xi_1\}$
	\end{itemize}
	are not $\fB(\fc \setminus \{\xi\})$-separated. 
    
\par We now proceed to prove our main result in this section.

\begin{thm}\label{thm:complex-CSP}
$\widetilde{\PS}\oplus i\widetilde{\PS}$ is not isomorphic to a complex Banach lattice.
\end{thm}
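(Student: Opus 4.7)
The plan is to show $(\widetilde{\PS})_\mathbb{C}$ satisfies the $\mathbb{C}$-DP and then conclude via a complex analogue of the reduction developed in Section \ref{SectionAuxiliaryBL}. Proposition \ref{p:BLL1} and its proof adapt routinely to complex Banach lattices (as already hinted at), and combined with the fact that $(\widetilde{\PS})_\mathbb{C}$ is an isomorphic predual of $\ell_1^\mathbb{C}(\Gamma)$ with a countable $1$-norming set, any complex Banach lattice structure would force $(\widetilde{\PS})_\mathbb{C}$ to embed as a complex sublattice $Y$ of $\ell_\infty^\mathbb{C}$. In such a $Y$ the point evaluations $e_n^*$ form a norming family of lattice homomorphisms satisfying $e_n^*(|y|)=|e_n^*(y)|$; transferring by the isomorphism $T$ would exhibit, for every $f\in(\widetilde{\PS})_\mathbb{C}$, the element $g=T^{-1}|Tf|$ witnessing the failure of $\mathbb{C}$-DP against the norming sequence $x_n^*=T^*e_n^*/\|T\|$.

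To verify the $\mathbb{C}$-DP, I fix a norming sequence $(e_n^*)_{n\in\omega}$ in $B_{(\widetilde{\PS})_\mathbb{C}^*}\equiv B_{\JL(\cA)_\mathbb{C}^\perp}$, decompose $e_n^*=\mu_n+\bar\nu_n$, and mimic the extraction argument of Theorem \ref{thm:main}: the norming condition together with $\|\mu_n\|_1\leq 1$ yields an infinite $\omega_0\subseteq\omega$ and an injection $\pi$ such that $(\mu_{\pi(n)})_{n\in\omega_0}$ is $\mathbb{C}$-admissible, and hence coincides with $(\nu_n^\xi)_{n\in\omega}$ for a unique $\xi<\fc$ by the enumeration preceding this theorem. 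I take $f=1_{B_\xi^0}-1_{B_\xi^1}\in\widetilde{\PS}\subseteq(\widetilde{\PS})_\mathbb{C}$. Assuming for contradiction that $g\in(\widetilde{\PS})_\mathbb{C}$ satisfies $\langle\nu_n^\xi,g\rangle=|\langle\nu_n^\xi,f\rangle|$ for every $n$, I approximate $g$ by a simple complex function $h=cf+s\in(\widetilde{\PS})_\mathbb{C}$ with $c\in\mathbb{C}$, $s$ a simple $\fB(\fc\setminus\{\xi\})$-measurable element of $(\widetilde{\PS})_\mathbb{C}$, and $\|g-h\|<\delta$. Using the analogue of Remark \ref{rem:zero} to replace $\nu_n^\xi$ by $\mu_n^\xi$ on $f$ together with the estimates of (Q1)--(Q2), one arrives at the complex counterpart of \eqref{eq:main-thm-5}: for every $n\in J_0^\xi$,
\[ |\langle\mu_n^\xi,f\rangle|\approx_{\delta(4+3|c|)} c\,\langle\mu_n^\xi,f\rangle+\langle\nu_n^\xi,s\rangle, \]
where $\langle\mu_n^\xi,f\rangle$ is within $3\delta$ of $2a$, $-2a$, or $0$ on $J_2^\xi$, $J_1^\xi\setminus J_2^\xi$, $J_0^\xi\setminus J_1^\xi$ respectively, and $|\langle\mu_n^\xi,f\rangle|$ is within $3\delta$ of $2|a|$ on $J_1^\xi$.

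A case split on $|c|$ now produces the contradiction. If $|c|\leq 1/2$, the inequality $||a|\pm ca|\geq |a|(1-|c|)\geq|a|/2$ together with the bound $2\delta(4+3|c|)\leq 11\delta$ yields $|\langle\nu_n^\xi,s\rangle-\langle\nu_k^\xi,s\rangle|\geq|a|-11\delta\geq c-11\delta>0$ for $n\in J_1^\xi$ and $k\in J_0^\xi\setminus J_1^\xi$. If $|c|\geq 1/2$, subtracting the first two relations for $n\in J_2^\xi$ and $k\in J_1^\xi\setminus J_2^\xi$ gives
\[ |\langle\nu_n^\xi,s\rangle-\langle\nu_k^\xi,s\rangle|\geq 4|c||a|-2\delta(4+3|c|)\geq 2|c|(2|a|-3\delta)-8\delta\geq 2|a|-11\delta\geq 2c-11\delta>0. \]
Either way, the complex version of Lemma \ref{lem:sep} (immediate from the real version after splitting $s=s_1+is_2$, with a harmless constant loss) exhibits a $\fB(\fc\setminus\{\xi\})$-separation of one of the pairs in (Q3), contradicting the non-separation property. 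The principal obstacle is handling a \emph{complex} coefficient $c$: the sign-flip reduction to $r\geq 0$ used in Theorem \ref{thm:main} is no longer available, so one must rely on the parallelogram-type bound $||a|\pm ca|\geq|a|(1-|c|)$ together with the refined estimate $|a|\geq c$ from (Q2) and the slightly tightened $\delta<c/22$ fixed in the construction. The refinement $\re a\geq c/\sqrt 2$ or $\im a\geq c/\sqrt 2$ in (Q2) is also what makes the production of (Q1)--(Q3) itself go through in the $\mathbb{C}$-admissible setting.
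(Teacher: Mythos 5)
Your proposal is correct and follows essentially the same route as the paper: reduce to the ($\mathbb{C}$-DP) via the complex analogue of Corollary \ref{cor:desired property}, extract a $\mathbb{C}$-admissible subsequence coded by some $(\nu_n^\xi)_{n\in\omega}$, test against $f=1_{B_\xi^0}-1_{B_\xi^1}$, approximate a putative modulus by $h=rf+s$, and contradict the non-separation in (Q3) after splitting on $|r|$ at $1/2$. The only (harmless) divergence is local: where the paper rotates $g$ to force $r\geq 0$ and then compares real or imaginary parts of $\langle\nu_n,s\rangle$ using the refinement $\re a\geq c/\sqrt{2}$ or $\im a\geq c/\sqrt{2}$ from (Q2), you keep the coefficient complex and bound $\left|\langle\nu_n,s\rangle-\langle\nu_k,s\rangle\right|$ directly via the reverse triangle inequality and $\left||a|\pm ra\right|\geq |a|(1-|r|)$, which yields the same constants ($|a|-11\delta$ and $2|a|-11\delta$ with $\delta<c/22$) and is, if anything, slightly cleaner at that step.
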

\begin{proof}
We will prove this statement by showing that $(\widetilde{\PS})_\mathbb{C}$ does have the ($\mathbb{C}$-DP), which is equivalent to the fact that $(\widetilde{\PS})_\mathbb{C}$ is not isomorphic to a complex Banach lattice --this can be checked with a straightforward adaptation of the proof of Corollary \ref{cor:desired property}.

Fix a norming sequence $(e_n^*)_{n \in \omega}$ in $B_{(\widetilde{\PS})_\mathbb{C}^*}$. Our aim is to find an $f\in (\widetilde{\PS})_\mathbb{C}$ such that no $g\in (\widetilde{\PS})_\mathbb{C}$ satisfies 
$$
\langle e_n^*, g\rangle=|\langle e_n^*, f\rangle|, \text{ for every } n\in\omega.
$$

We shall denote $e_n^*=e_{n,0}^*+ie_{n,1}^*$, where $e_{n,j}^*\in \widetilde{\PS}^*$ for $j=0,1$. The identification $\widetilde{\PS}\equiv\JL(\cA)^\perp$ allows us to write, for every $n\in\omega$ and $j\in\{0,1\}$, $e_{n,j}^*=\mu_{n,j}+\bar\nu_{n,j}$, where $\mu_{n,j}\in \ell_1(\omega\times 2)$ --which is determined by its values on finite sets of $\omega\times 2$-- and $\bar\nu_{n,j}\in \ell_1(\fc\times 2)$ --which vanishes on finite subsets of $\omega\times 2$-- fulfill $\mu_{n,j}(k,0)=-\mu_{n,j}(k,1)$ for every $k\in \omega$ and $\bar\nu_{n,j}(\alpha,0)=-\bar\nu_{n,j}(\alpha,1)$ for every $\alpha<\fc$ --for details, see Section \ref{subsec:norming}.
Moreover, as $(e_n^*)_{n\in\omega}$ is a norming set, there exists $\tilde{c}>0$ such that
$$
2\sup_n|\mu_n(k,0)|=\sup_n|\mu_n(f_k)|=\sup_n|e_n^*(f_k)|\geq \tilde{c},\quad \text{ for every } k\in\omega,
$$
where $f_k=1_{(k,0)}-1_{(k,1)}$. Arguing in the same way as we did in the proof of Theorem \ref{thm:main}, we can find an injective map $\pi:\omega_0\to \omega$ for some infinite $\omega_0\subseteq\omega$ in such a way that $(\mu_{\pi(n)})_{n\in{\omega_0}}$ is a $\mathbb{C}$-admissible sequence. Therefore, the sequence $(e_{\pi(n)}^*)_{n\in\omega_0}$ is coded by some $(\nu_n^\xi)_{n\in\omega}=(\mu_n^\xi,\bar\nu_n^\xi)_{n\in\omega}$ --where $\xi<\fc$-- satisfying properties \emph{i)}--\emph{iii)} mentioned in the previous comments to this theorem. Recall that the enumeration was chosen so as to satisfy $\bar\nu_n^\xi(\alpha,j)=0$ for all $\alpha\geq \xi$, $n\in\omega$ and $j\in\{0,1\}$.

Again, for the sake of simplicity, for the remainder of the proof of the theorem we will omit the superscript $\xi$ in $\nu_n^\xi$, $\mu_n^\xi$, $\bar\nu_n^\xi$ and $p_n^\xi$. By (Q3), the pairs of measures 
\begin{itemize}
    \item $\{\nu_n: n\in J_2^\xi\}$ and $\{\nu_n: n\in J_1^\xi\setminus J_2^\xi\}$, 
    \item $\{\nu_n: n\in J_1^\xi\}$ and $\{\nu_n: n\in J_0^\xi\setminus J_1^\xi\}$
\end{itemize}
are not $\mathfrak{B}(\fc\setminus\{\xi\})$-separated. Let $c=\inf_{n\in\omega}|\mu_n(n,0)|$ and let $a$ be the complex number appearing in property (Q2). We will only consider the case when $\re a\geq \frac{c}{\sqrt{2}}$, but the proof can be easily adapted to the case when $\im a\geq \frac{c}{\sqrt{2}}$

Let us consider the function
\begin{equation}\label{eq:main-thm-1c}
   f=1_{B^0_\xi}-1_{B^1_\xi}\in (\widetilde{\PS})_\mathbb{C}. 
\end{equation}
We will check that there cannot exist a function in $(\widetilde{\PS})_\mathbb{C}$ giving the modulus of $f$ with respect to the sequence $(\nu_n)_{n\in\omega}=(e_{\pi(n)}^*)_{n\in\omega_0}$; in particular, this would imply that $f$ cannot have a modulus with respect to $(e_{n}^*)_{n\in\omega}$.
Let us suppose that there exists an element $g\in (\widetilde{\PS})_\mathbb{C}$ such that
$$
\langle \nu_n, g\rangle=|\langle \nu_n, f\rangle|=|\langle\mu_n,f\rangle|, \text{ for every } n\in\omega,
$$
where in the second equality we are using that $\bar\nu_n(\xi,j)=0$ for $n\in\omega$ and $j\in\{0,1\}$ --see Remark \ref{rem:zero}. We will arrive at a contradiction with the separation of the two pairs of sets of measures defined above. The computations will be very similar to those performed in the proof of Theorem \ref{thm:main} --following again very closely the argument of \cite[Lemma 5.3]{PSA}. We will keep this notation: given $a,b\in \R$ and $\delta>0$, we write $a\approx_\delta b$ to mean $|a-b|<\delta$. 

Let us fix $\delta>0$ as in (Q1)--(Q3). Since the subspace of $(\widetilde{\PS})_\mathbb{C}$ consisting of simple $\fB$-measurable functions is dense in $(\widetilde{\PS})_\mathbb{C}$, there is such a function $h\in (\widetilde{\PS})_\mathbb{C}$ such that $\|g-h\|<\delta$. Therefore,
$$
|\langle\mu_n,f\rangle|=\langle\nu_n,g\rangle\approx_\delta \langle \nu_n,h\rangle, \text{ for every } n\in\omega.
$$

Without loss of generality, we assume that $h=r f+s$, where $r\in\mathbb{C}$ and $s$ is a simple $\mathfrak{B}(\mathfrak{c}\setminus\{\xi\})$-measurable function lying in $(\widetilde{\PS})_\mathbb{C}$. Let us further suppose that $r\geq 0$; if $r=|r|e^{i\theta}$ we may apply our argument to the function $e^{i\theta}g$ instead of $g$. That is, if we prove that $e^{-i\theta}|f|$ cannot exist, then $|f|$ does not exist either. Hence, we have:
\begin{equation}\label{eq:main-thm-2c}
    |\langle \mu_n, f\rangle|\approx_\delta r\langle \mu_n, f\rangle+\langle \nu_n,s\rangle, \text{ for every } n\in\omega.
\end{equation}

Now, observe that properties (Q1) and (Q2), together with the definition of $B^0_\xi$ and $B^1_\xi$, yield, for every $n\in J_0^\xi$
$$
\begin{array}{cl}
  \bil{\mu_n}{f}\approx_{\delta} \int_{c_n} f d\mu_n=f(p_n)\mu_n(p_n)+f(c_n\setminus p_n)\mu_n(c_n\setminus p_n) 
      \approx_{3\delta} \left\{\begin{array}{rl}
       2a   &  \text{ if } n\in J_2^\xi,\\[1mm]
       -2a   &  \text{ if } n\in J_1^\xi\setminus J_2^\xi, \\[1mm]
       0 &  \text{ if } n\in J_0^\xi\setminus J_1^\xi.
     \end{array}\right.
\end{array}
$$
Hence,
\begin{equation}\label{eq:main-thm-3c}
   \langle \mu_n, f\rangle    \approx_{3\delta} \left\{\begin{array}{rl}
       2a    &  \text{ if } n\in J_2^\xi,\\[1mm]
       -2a  &  \text{ if } n\in J_1^\xi\setminus J_2^\xi, \\[1mm]
       0 &  \text{ if } n\in J_0^\xi\setminus J_1^\xi.
     \end{array}\right.   
\end{equation}

We deduce from the previous equation that
\begin{equation}\label{eq:main-thm-4c}
    |\langle \mu_n,f\rangle | \approx_{3\delta} \left\{\begin{array}{rl}
       2|a|   &  \text{ if } n\in J_1^\xi,\\[1mm]
       0 & \text{ if } n\in J_0^\xi \setminus J_1^\xi.
     \end{array}\right.   
\end{equation}

Now, using (\ref{eq:main-thm-3c}) and (\ref{eq:main-thm-4c}), we infer from (\ref{eq:main-thm-2c}) the following relations:
\begin{equation}\label{eq:main-thm-5c}
\left\{\begin{array}{rrl}
       2|a|\approx_{\delta(4+3r)}& 2ra +\langle \nu_n,s\rangle  &  \text{ if } n\in J_2^\xi,\\[1mm]
       2|a|\approx_{\delta(4+3r)}& -2ra+\langle \nu_n,s\rangle  &  \text{ if } n\in J_1^\xi\setminus J_2^\xi,\\[1mm]
       0\approx_{\delta(4+3r)}& \langle \nu_n,s\rangle & \text{ if } n\in J_0^\xi \setminus J_1^\xi.
     \end{array}\right.   
\end{equation}

First, suppose that $0\leq r\leq 1/2$. The first two relations of (\ref{eq:main-thm-5c}) give for every $n\in J_1^\xi$
$$
|\langle \nu_n,s\rangle|\geq 2|1\pm re^{i\alpha}||a|-\delta (4+3r)\geq |a|-\frac{11}{2}\delta,
$$
where $a=|a|e^{i\alpha}$, while the third one gives for every $k\in J_0^\xi \setminus J_1^\xi$,
$$
|\langle \nu_k,s\rangle|\leq \delta (4+3r)=\frac{11}{2}\delta.
$$
Thus, for any $n\in J_1^\xi$ and any $k\in J_0^\xi \setminus J_1^\xi$, we have, using that $\delta<c/22$ and $|a|\geq c$,
$$
|\langle \nu_n,s\rangle|-|\langle \nu_k,s\rangle|\geq |a|-11\delta>0.
$$
This already implies --by adjusting Lemma \ref{lem:sep} to the complex setting-- that the sets $\{\nu_n: n\in J_1^\xi\}$ and $\{\nu_n: n\in J_0^\xi\setminus J_1^\xi\}$ are $\fB(\fc\setminus\{\xi\})$-separated.
On the other hand, if $r\geq 1/2$, then using relations (\ref{eq:main-thm-5c}) again, we infer that for every $n\in J_1^\xi \setminus J_2^\xi$ and every $k\in J_2^\xi$

\begin{align*}
\re \langle \nu_n,s\rangle -\re \langle \nu_k,s\rangle &  \geq 2r\re a+2|a|-\delta(4+3r)-\bigl(\delta(4+3r)+2|a|-2r\re a \bigr) = 
\\
     & = 2r(2\re a-3\delta)-8\delta \geq 2\re a-11\delta>0,
\end{align*}
since we have supposed that $\re a\geq \frac{c}{\sqrt{2}}$. It is clear that if $\im a\geq \frac{c}{\sqrt{2}}$, we may obtain, using the same procedure as shown above, that $\im \langle \nu_n,s\rangle -\im \langle \nu_k,s\rangle\geq 2\re a-11\delta>0$ for every $n\in J_1^\xi \setminus J_2^\xi$ and every $k\in J_2^\xi$. Hence, the sets $\{\nu_n: n\in J_2^\xi\}$ and $\{\nu_n: n\in J_1^\xi\setminus J_2^\xi\}$ are $\mathfrak{B}(\mathfrak{c}\setminus\{\xi\})$-separated. This is a contradiction.
\end{proof}

We have already remarked that $\widetilde{\PS}$ is $1$-complemented in a $C(K)$-space --see the paragraph that comes after Definition \ref{def:complex-desired-property}. We will see below that it is easy to deduce from our last result that $\widetilde{\PS}$ cannot be isomorphic to a Banach lattice. Therefore this modification of $\PS$ is also a counterexample to the CSP for (real) Banach lattices.

\begin{cor}\label{cor:final}
$\widetilde{\PS}$ is not isomorphic to a Banach lattice.
\end{cor}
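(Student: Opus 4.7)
The plan is a short proof by contradiction that uses Theorem \ref{thm:complex-CSP} as a black box. I will assume that $\widetilde{\PS}$ is linearly isomorphic to some real Banach lattice $Y$ via a bounded real-linear isomorphism $T\colon \widetilde{\PS}\to Y$, and derive a contradiction with Theorem \ref{thm:complex-CSP} by complexifying $T$.

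Concretely, I define
\[ T_\mathbb{C}\colon (\widetilde{\PS})_\mathbb{C}\longrightarrow Y_\mathbb{C},\qquad T_\mathbb{C}(x_1+ix_2):=Tx_1+iTx_2. \]
This map is $\mathbb{C}$-linear and bijective, with inverse $(T^{-1})_\mathbb{C}$. To see that it is actually an isomorphism between the normed complexifications (endowed with the norm \eqref{eq:norm2}), I rely on the elementary estimate $\max(\|x\|,\|y\|)\leq \|x+iy\|_{E_\mathbb{C}}\leq \|x\|+\|y\|$ valid for any real Banach space $E$, which follows directly from \eqref{eq:norm2} by taking $\theta=0,\pi/2$ for the lower bound and the triangle inequality for the upper bound. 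Applying this to both $E=\widetilde{\PS}$ and $E=Y$ yields $\|T_\mathbb{C}\|\leq 2\|T\|$ and $\|T_\mathbb{C}^{-1}\|\leq 2\|T^{-1}\|$.

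Since $Y_\mathbb{C}$ is by definition a complex Banach lattice, the existence of $T_\mathbb{C}$ exhibits $\widetilde{\PS}\oplus i\widetilde{\PS}=(\widetilde{\PS})_\mathbb{C}$ as a space isomorphic to a complex Banach lattice, directly contradicting Theorem \ref{thm:complex-CSP}. There is no genuine obstacle: all the substantial work sits in the proof of Theorem \ref{thm:complex-CSP}, and the only small technicality is the functoriality of complexification for bounded real-linear maps, which amounts to the two-sided estimate above.
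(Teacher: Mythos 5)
Your proposal is correct and follows essentially the same route as the paper: assume $\widetilde{\PS}\cong Y$ for a real Banach lattice $Y$, complexify the isomorphism, and derive a contradiction with Theorem \ref{thm:complex-CSP}. The only cosmetic differences are that the paper bounds $\|T_\mathbb{C}\|$ via the modulus inequality $|Tf_1+iTf_2|\le |Tf_1|+|Tf_2|$ in the target lattice and then invokes the bounded inverse theorem, whereas you bound both $T_\mathbb{C}$ and $T_\mathbb{C}^{-1}$ directly from the two-sided estimate for the norm \eqref{eq:norm2}; just note that your estimate controls the complexification norm on $Y_\mathbb{C}$, so to conclude that $(\widetilde{\PS})_\mathbb{C}$ is isomorphic to the complex Banach lattice $(Y_\mathbb{C},\|\cdot\|_{Y_\mathbb{C}})$ with the lattice norm \eqref{eq:norm1} you still implicitly use the equivalence of \eqref{eq:norm1} and \eqref{eq:norm2}, which the paper records at the beginning of Section \ref{SectionCSPcomplex}.
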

\begin{proof}
Suppose that there exist a Banach lattice $X$ and an isomorphism $T:\widetilde{\PS}\to X$. Recall that since $(\widetilde{\PS})_\mathbb{C}$ is a subspace of $\JL(\cB)_\mathbb{C}\equiv C(K_\cB)_\mathbb{C}$ and in this space the complex Banach lattice norm induced by (\ref{eq:norm1}) coincides with the one defined in (\ref{eq:norm2}), then the norm of $(\widetilde{\PS})_\mathbb{C}$ is also given by $\|f_1+if_2\|_{(\widetilde{\PS})_\mathbb{C}}=\sup_{\theta\in [0,2\pi]}\|f_1\cos\theta+f_2\sin\theta\|$.

The operator $T_\mathbb{C}:(\widetilde{\PS})_\mathbb{C}\to X_\mathbb{C}$ given by $T_\mathbb{C}(f_1+if_2):=Tf_1+iTf_2$ is clearly $\mathbb{C}$-linear and bijective. Let us now check that $T_\mathbb{C}$ is continuous. First note that by definition of the modulus map (\ref{eq:norm1}), for every $f_1,f_2\in \widetilde{\PS}$ we have $|Tf_1+iTf_2|\leq |Tf_1|+|Tf_2|$. Therefore, for every $f_1+if_2\in (\widetilde{\PS})_\mathbb{C}$ we have
\begin{eqnarray*}
\|T_\mathbb{C}(f_1+if_2)\|_{X_\mathbb{C}}&=&\||Tf_1+iTf_2|\|_X\leq   \|Tf_1\|_X+\|Tf_2\|_X \leq \|T\|\bigl(\|f_1\|_{\widetilde{\PS}}+\|f_2\|_{\widetilde{\PS}}\bigr) \\
&\leq& 2\|T\|\|f_1+if_2\|_{(\widetilde{\PS})_\mathbb{C}},
\end{eqnarray*}
and by the bounded inverse theorem it follows that $T_\mathbb{C}$ is an isomorphism. This is a contradiction with the previous theorem, so $\widetilde{\PS}$ cannot be isomorphic to a Banach lattice.
\end{proof}

\begin{rem}
    Regarding the CSP for complex Banach lattices, one question that remains open is whether every complemented subspace of a complex Banach lattice is linearly isomorphic to the complexification of some real Banach space. Note that if a complex Banach space is the complexification of a real Banach space then, in particular, it is isomorphic to its complex conjugate. As far as we are concerned, all the known examples of complex Banach spaces non-isomorphic to their corresponding complex conjugates --see, for instance, \cite{Anisca, Bourgain, Ferenczi, Kalton:95}-- fail GL-lust, so they cannot be complemented subspaces of complex Banach lattices.
\end{rem}

\section*{Acknowledgements}
We wish to thank Antonio Avil\'es and F\'elix Cabello for interesting discussions related to the topic of this paper. We would also like to thank the anonymous reviewers for their valuable comments and suggestions.

Research of D. de Hevia and P. Tradacete partially supported by grants PID2020-116398GB-I00, CEX2019-000904-S and CEX2023-001347-S funded by  MCIN/AEI/10. 13039/501100011033. D. de Hevia benefited from an FPU Grant FPU20/03334 from the Ministerio de Universidades. 

G. Mart\'inez-Cervantes was partially supported by Fundación Séneca - ACyT Región de Murcia (21955/PI/22), by Generalitat Valenciana project CIGE/2022/97 and by Agencia Estatal de Investigación and EDRF/FEDER “A way of making Europe"
(MCIN/ AEI/10.13039/501100011033) (PID2021-122126NB-C32).

Research of A. Salguero-Alarcón has been supported by projects PID-2019-103961GB-C21 and PID-2023-146505NB-C21 funded by MCIN/AEI/10.13039/501100011033, and IB20038 (Junta de Extremadura).

P. Tradacete is also supported by a 2022 Leonardo Grant for Researchers and Cultural Creators, BBVA Foundation.

\end{document}